\newtheorem{theorem}{Theorem}
\newtheorem{definition}{Definition}
\newtheorem{example}{Example}
\newtheorem{lemma}{Lemma}
\numberwithin{equation}{section}
\numberwithin{definition}{section}
\numberwithin{lemma}{section}
\numberwithin{theorem}{section}
\begin{document}

\author{Sezin Aykurt Sepet\\
\small K{\i}r\c{s}ehir Ahi Evran University, Department of Mathematics, K{\i}r\c{s}ehir, Turkey\\
\small sezinaykurt@hotmail.com}
\title{Bi-slant $\xi^{\perp}$-Riemannian submersions}
\maketitle

\begin{abstract}
 We introduce bi-slant $\xi^{\perp}$-Riemannian submersions from Sasakian manifolds onto Riemannian manifolds as a generalization of slant and semi-slant $\xi^{\perp}$-Riemannian submersion. We give an example and investigate the geometry foliations. After we obtain necessary and sufficient conditions related to totally geodesicness of submersion. Finally we give decomposition theorems for total manifold of such submersions.\\

\textbf{Keywords} :Riemannian submersion, Sasakian manifold, Bi-slant $\xi^{\perp}$- Riemannian submersion.\\

\textbf {2010 Subject Classification}: {53C15, 53C40}

\end{abstract}

\section {Introduction}

\label{intro}

The differential geometry of slant submanifolds has been studied by many authors since B.Y Chen defined slant immersions in complex geometry as a natural generalization of both holomorphic immersions and totally real immersions. Carriazo \cite{carriazo} has introduced bi-slant immersions. Then Uddin et al. \cite{uddin} have studied warped product bi-slant immersions in Kaehler manifolds. As a generalization of CR-submanifolds, slant and semi-slant submanifolds, Cabrerizo et al. \cite{cabrerizo} have defined bi-slant submanifolds of almost contact metric manifolds. Recently, Alqahtani et al. \cite{Alqahtani} have investigated warped product bi-slant submanifolds of cosymplectic manifolds.

 On the other hand Riemannian submersions was introduced by B. O'Neill \cite{oneill} and A. Gray \cite{gray}. Since then Riemannian submersions have been studied extensively by many geometers. In \cite{watson}, B. Watson defined almost Hermitian submersions between almost Hermitian manifolds. In this study, he investigated some geometric properties between base manifold and total manifolds as well as fibers.

B. Sahin \cite{sahinslant} defined slant submersions from almost Hermitian manifolds onto Riemannian manifolds as follows: Let $F$ be a Riemannian submersion from an almost Hermitian manifold $(M,g,J)$ onto a Riemannian manifold $(N,g')$. If for any nonzero vector $X\in\Gamma\left(\ker F_{*}\right)$ the angle $\theta\left(X\right)$ between $JX$ and the space $\ker F_{*}$ is a constant, i.e. it is independent of the choice of the point $p\in M$ and choice of the tangent vector $X$ in $\ker F_{*}$, then we say that $F$ is a slant submersion. In this case, the angle $\theta$ is called the slant angle of the slant submersion. Many interesting studies on several types of submersions have been done. For instance, slant and semi-slant submersions \cite{gunduzalp,gunduzalpsemi,kupeli,prasad}, anti-invariant riemannian submersions \cite{sahinanti}, semi-invariant submersions \cite{ozdemir,sahinsemi}, pointwise slant submersions \cite{sezin,lee2}, hemi-slant submersions \cite{hakan}, generic submersions \cite{Tastan3}.

Furthermore J.W. Lee \cite{lee} defined anti-invariant $\xi^{\perp}$-Riemannian submersions from almost contact metric manifolds. Later as a generalization of anti-invariant $\xi^{\perp}$-Riemannian submersions, Akyol et al studied the geometry of semi-invariant $\xi^{\perp}$-Riemannian submersion, semi-slant $\xi^{\perp}$-Riemannian submersions and conformal anti-invariant $\xi^{\perp}$-submersions from almost contact metric manifolds \cite{akyol3,akyol2,akyol4}.

The paper is organized as follows. In Section 2, we recall the basic formulas and notions needed for this paper. In Section 3 we define bi-slant $\xi^{\perp}$-Riemannian submersions from Sasakian manifold and give an example. We also investigate the geometry of leaves of distributions and find necessary and sufficient conditions for such submersions to be totally geodesic and harmonic, respectively.\\

\section { Preliminaries}
An almost contact structure $(\phi,\xi,\eta)$ on a manifold $M$ of dimension $2n+1$ is defined by a tensor field $\phi$ of type $(1,1)$, a vector field $\xi$ (Reeb vector field) and a 1-form $\eta$ satisfying
\begin{align}
\phi^{2}=-I+\eta\otimes\xi, \  \eta(\xi)=1,  \  \eta\circ\phi=0,  \ \phi\xi=0
\end{align}
where $I$ is the identity endomorphism of $TM$. There always exist a Riemannian metric $g$ on $M$ satisfying the following compatibility condition with the almost contact structure $(\phi,\xi,\eta)$
\begin{align}
g\left(\phi X,\phi Y\right)=g\left(X,Y\right)-\eta(X)\eta(Y)
\end{align}
where $X,Y$ are any vector fields on $M$. Then the manifold $M$ together with the structure $(\phi,\xi,\eta,g)$ is called an almost contact metric manifold. An almost contact metric manifold is said to be normal if
\begin{align}
\left[\phi,\phi\right]+2d\eta\otimes\xi=0
\end{align}
where $\left[\phi,\phi\right]$ is Nijenhuis tensor of $\phi$. Let $\Phi$ denote the 2-form on an almost contact metric manifold $(M,\phi,\xi,\eta,g)$ given by $\Phi(X,Y)=g\left(X,\phi Y\right)$ for any $X,Y\in \Gamma\left(TM\right)$. The $\Phi$ is called the fundamental 2-form of $M$. An almost contact metric manifold $(M,\phi,\xi,\eta,g)$ is said to be a contact metric manifold if $\Phi=d\eta$. A normal contact metric manifold is called a Sasakian manifold. Then the structure equations of Sasakian manifold are given by
\begin{align*}
\left(\nabla_{X}\phi\right)Y=g\left(X,Y\right)\xi-\eta(Y)X  \ \ \text{and}  \ \ \nabla_{X}\xi=-\phi X,
\end{align*}
where $\nabla$ is the Levi-Civita connection of $g$ and $X,Y\in\Gamma\left(TM\right)$.\\

Let $(M,g)$ and $(N,g')$ be an Riemannian manifolds with $m$ and $n$ dimension respectively $m>n$. A surjective mapping $F:M\longrightarrow N$ is said to be a Riemannian submersion if
\begin{enumerate}[i)]
\item $F$ has maximal rank
\item The differential $F_{*}$ preserves the lenghts of the horizontal vectors.
\end{enumerate}

For any $q\in N$, $F^{-1}(q)$ is an $m-n$ dimensional submanifold of $M$ called fiber. If a vector field on $M$ is always tangent to fibers then it is called vertical. If a vector field on $M$ is always orthogonal to fibers then it is called horizontal\cite{oneill}. A vector field $X$ on $M$ is called basic if it is horizontal and $F$-related to a vector field $X_{*}$ on $N$, i.e. $F_{*}X_{p}=X_{*F(p)}$ for all $p\in M$. We denote the projection morphisms on the distributions $(kerF_{*})$ and $(kerF_{*})^{\perp}$ by $\mathcal{V}$ and $\mathcal{H}$ ,respectively \cite{falcitelli}.
\\
A Riemannian submersion $F:M\longrightarrow N$ characterized by two fundamental tensor fields $\mathcal{T}$ and $\mathcal{A}$ on $M$ is defined by the following formulae
\setlength\arraycolsep{2pt}
\begin{eqnarray}
\mathcal{T}(E,F)&=&\mathcal{T}_{E}F=\mathcal{H}\nabla_{\mathcal{V}E}\mathcal{V}F+\mathcal{V}\nabla_{\mathcal{V}E}\mathcal{H}F \\
\mathcal{A}(E,F)&=&\mathcal{A}_{E}F=\mathcal{V}\nabla_{\mathcal{H}E}\mathcal{H}F+\mathcal{H}\nabla_{\mathcal{H}E}\mathcal{V}F
\end{eqnarray}
for any vector fields $E$ and $F$ on $M$ where $\nabla$ is the Levi-Civita connection of $(M,g)$.
In addition, for $X,Y\in\Gamma\left(\left(kerF_{*}\right)^{\perp}\right)$ and $U,W\in\Gamma\left(kerF_{*}\right)$ the tensor fields satisfy
\setlength\arraycolsep{2pt}
\begin{eqnarray}
\mathcal{T}_{U}W&=&\mathcal{T}_{W}U\\
\mathcal{A}_{X}Y&=&-\mathcal{A}_{Y}X= \frac{1}{2}\mathcal{V}[X,Y].
\end{eqnarray}

On the other hand, note that a Riemannian submersion $F:M\longrightarrow N$ has totally geodesic fibers if and only if $\mathcal{T}$ vanishes identically.

Now, we recall the following lemma from \cite{oneill}.
\begin{lemma}
Let $F:M\longrightarrow N$ be a Riemannian submersion between Riemannian manifolds. If $X$ and $Y$ are basic vector fields of $M$ then
\begin{enumerate}[i)]
\item $g(X,Y)=g'(X_{*},Y_{*})\circ F$,
\item the horizontal part $[X,Y]^{\mathcal{H}}$ of $[X,Y]$ is a basic vector field and\\ $F_{*}\left([X,Y]^\mathcal{H}\right)=[X_{*},Y_{*}]$,
\item $[V,X]$ is vertical for any vector field $V$ of $(kerF_{*})$,
\item $\left(\nabla^{M}_{X}Y\right)^{\mathcal{H}}$ is the basic vector field corresponding to $\nabla^{N}_{X_{*}}Y_{*}$,
\end{enumerate}
where $\nabla^{M}$ and $\nabla^{N}$ are the Levi-Civita connections on $M$ and $N$, respectively.
\end{lemma}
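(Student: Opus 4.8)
The plan is to prove the four items in increasing order of difficulty, using throughout the language of $F$-relatedness (recall $X$ basic means $X$ is horizontal and $F_{*p}X_p=X_{*F(p)}$ for all $p$) together, for the last item, with the Koszul formula. The only non-formal input I need is that $F_{*p}$ restricts to a linear isometry $\mathcal{H}_p=(\ker F_{*p})^{\perp}\to T_{F(p)}N$; this follows from condition (ii) in the definition of a Riemannian submersion by polarizing the identity $|F_{*p}H|=|H|$ for horizontal $H$.

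For (i): apply this isometry property at an arbitrary point $p$ to the horizontal vectors $X_p,Y_p$ and use $F_{*p}X_p=X_{*F(p)}$, $F_{*p}Y_p=Y_{*F(p)}$ to get $g(X,Y)(p)=g'(X_{*},Y_{*})(F(p))$, which is exactly (i).

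For (ii) and (iii) I would invoke the standard fact that $X\sim_F X_{*}$ and $Y\sim_F Y_{*}$ imply $[X,Y]\sim_F[X_{*},Y_{*}]$. In (ii), $X,Y$ basic give $F_{*}([X,Y]_p)=[X_{*},Y_{*}]_{F(p)}$; splitting $[X,Y]=[X,Y]^{\mathcal{H}}+[X,Y]^{\mathcal{V}}$ and using that $F_{*}$ kills the vertical part yields $F_{*}([X,Y]^{\mathcal{H}}_p)=[X_{*},Y_{*}]_{F(p)}$, so $[X,Y]^{\mathcal{H}}$ is horizontal and $F$-related to the smooth field $[X_{*},Y_{*}]$, hence basic. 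In (iii), a vertical field $V$ satisfies $F_{*}V\equiv 0$, i.e.\ $V\sim_F 0$, so $[V,X]\sim_F[0,X_{*}]=0$, which says precisely that $[V,X]$ is vertical.

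The main work is (iv). I would write the Koszul formula for $\nabla^{M}$ with the basic fields $X,Y$ tested against an arbitrary basic field $Z$, and push each of the six terms down to $N$. The three metric-derivative terms transform via (i) and $F$-relatedness, e.g.\ $X\,g(Y,Z)=X\big((g'(Y_{*},Z_{*}))\circ F\big)=\big(X_{*}\,g'(Y_{*},Z_{*})\big)\circ F$; the three bracket terms transform via (ii) and (i), e.g.\ $g([X,Y],Z)=g([X,Y]^{\mathcal{H}},Z)=\big(g'([X_{*},Y_{*}],Z_{*})\big)\circ F$, using that $Z$ is horizontal. Summing, the right-hand side collapses by the Koszul formula on $N$ to $2\big(g'(\nabla^{N}_{X_{*}}Y_{*},Z_{*})\big)\circ F$, so $g\big((\nabla^{M}_{X}Y)^{\mathcal{H}},Z\big)=\big(g'(\nabla^{N}_{X_{*}}Y_{*},Z_{*})\big)\circ F$ for every basic $Z$. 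Since basic fields span $\mathcal{H}_p$ at each point and $F_{*p}$ is an isometry on $\mathcal{H}_p$, this forces $F_{*}\big((\nabla^{M}_{X}Y)^{\mathcal{H}}\big)=\nabla^{N}_{X_{*}}Y_{*}$; being horizontal and $F$-related to a smooth field, $(\nabla^{M}_{X}Y)^{\mathcal{H}}$ is the basic field corresponding to $\nabla^{N}_{X_{*}}Y_{*}$. The one delicate point is this final implication: one must use that through every horizontal vector there passes a basic field (the horizontal lift of a local field on $N$), so that testing against all basic $Z$ genuinely determines the horizontal vector; I expect that to be the only step requiring care, the rest being bookkeeping with $F$-relatedness.
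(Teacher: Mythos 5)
Your proposal is correct: (i) follows from polarization of the length-preserving condition plus $F$-relatedness, (ii) and (iii) from the fact that $F$-related fields have $F$-related brackets (with $F_{*}$ annihilating vertical parts), and (iv) from pushing the Koszul formula down to $N$ and then using horizontal lifts of local fields on $N$ to see that testing against all basic $Z$ determines the horizontal part; the point you flag as delicate is indeed the only one needing care, and your handling of it is sound. Note that the paper itself offers no proof of this lemma --- it is quoted verbatim from O'Neill's paper --- and your argument is essentially the classical one found there and in Falcitelli--Ianus--Pastore, so there is no substantive difference of route to report.
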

On the other hand from (2.2) and (2.3) we have
\setlength\arraycolsep{2pt}
\begin{eqnarray}
\nabla_{U}V &=&\mathcal{T}_{U}V+\bar{\nabla}_{U}V \\
\nabla_{U}X&=&\mathcal{H}\nabla_{U}X+\mathcal{T}_{U}X \\
\nabla_{X}U&=&\mathcal{A}_{X}U+\mathcal{V}\nabla_{X}U\\
\nabla_{X}Y&=&\mathcal{H}\nabla_{X}Y+\mathcal{A}_{X}Y
\end{eqnarray}
for $X,Y\in\Gamma\left(\left(\ker F_{*}\right)^{\perp}\right)$ and $U,V\in\Gamma\left(\ker F_{*}\right)$, where $\bar{\nabla}_{U}V=\mathcal{V}\nabla_{U}V$. Moreover, if $X$ is basic then $\mathcal{H}\nabla_{U}X=\mathcal{A}_{X}U$.

Let $(M,g)$ and $(N,g')$ be Riemannian manifolds and $\psi:M\longrightarrow N$ is a smooth mapping. The second fundamental form of $\psi$ is given by
\begin{equation}
\nabla\psi_{*}(X,Y)=\nabla^{\psi}_{X}\psi_{*}(Y)-\psi_{*}\left(\nabla^{M}_{X}Y\right)
\end{equation}
for $X,Y\in\Gamma\left(TM\right)$, where $\nabla^\psi$ is the pullback connection. Recall that $\psi$ is said to be harmonic if $trace\nabla\psi_{*}=0$ and $\psi$ is called a totally geodesic map if $\left(\nabla\psi_{*}\right)\left(X,Y\right)=0$ for $X,Y\in \Gamma\left(TM\right)$ \cite{baird}.\\

\section {Bi-Slant Submersions }

\begin{definition}
	Let $\left(M,\phi,\xi,\eta,g\right)$ be a Sasakian manifold  and $\left(N,g'\right)$ a Riemannian manifold. A Riemannian submersion $F:M\longrightarrow N$ is called a bi-slant submersion if
	
	\begin{enumerate}[i)]
		\item for nonzero any $X\in(D_{1})_{p}$ and $p\in M$, the angles $\theta_{1}$ between $\phi X$ and the space $(D_{1})_{p}$ are constant,
		\item for nonzero any $Y\in (D_{2})_{q}$ and $q\in M$, the angles $\theta_{2}$ between $\phi Y$ and the space $(D_{2})_{q}$ are constant, $F_{*}\left([X,Y]^\mathcal{H}\right)=[X_{*},Y_{*}]$,
		\item $\phi D_{1}\perp D_{2}$ and $\phi D_{2}\perp D_{1}$
	\end{enumerate}
such that $\ker F_{*}=D_{1}\oplus D_{2}$. $F$ is called proper if its bi-slant angles satisfy $\theta_{1},\theta_{2}\neq 0,\frac{\pi}{2}$.
\end{definition}

Now we give some examples of bi-slant submersions.
Suppose that $\mathbb{R}^{2n+1}$ denote a Sasakian manifold with the structure $\left(\phi,\xi,\eta,g\right)$ defined as
\begin{align*}
\phi\left(\sum_{i=1}^{n}\left(X_{i}\frac{\partial}{\partial x^{i}}+Y_{i}\frac{\partial}{\partial y^{i}}\right)+Z\frac{\partial}{\partial z}\right)&=\sum_{i=1}^{n}\left(Y_{i}\frac{\partial}{\partial x^{i}}-X_{i}\frac{\partial}{\partial y^{i}}\right),\\
\eta=\frac{1}{2}\left(dz-\sum_{i=1}^{n}y^{i}dx^{i}\right), \  \xi&=2\frac{\partial}{\partial z}\\
g=\eta\otimes\eta+\frac{1}{4}\sum_{i=1}^{n}\left(dx^{i}\otimes dx^{i}+\right.&\left.dy^{i}\otimes dy^{i}\right),
\end{align*}
where $(x^{1},...,x^{n},y^{1},...,y^{n},z)$	are the Cartesian coordinates.

\begin{example}
Let $F:\mathbb{R}^{9}\longrightarrow\mathbb{R}^{5}$ be a submersion defined by
\begin{eqnarray*}
\resizebox{.96 \textwidth}{!} {$F(x_{1},x_{2},x_{3},x_{4},y_{1},y_{2},y_{3},y_{4},z)=\left(\cos\alpha x_{1}-\sin\alpha x_{2},\frac{x_{3}+x_{4}}{\sqrt{2}},\sin\beta y_{1}+\cos\beta y_{2}, y_{3}, z\right)$}
\end{eqnarray*}
then
\begin{eqnarray*}
\ker F_{*}=span\left\{V_{1}=\sin\alpha\frac{\partial}{\partial x_{1}}+\cos\alpha\frac{\partial}{\partial x_{2}},V_{2}=\frac{1}{\sqrt{2}}\left(\frac{\partial}{\partial x_{3} }-\frac{\partial}{\partial x_{4}}\right),\right.  \\
\left.V_{3}=\left(\cos \beta \frac{\partial}{\partial y_{1}}-\sin \beta \frac{\partial}{\partial y_{2}}\right),V_{4}=\frac{\partial}{\partial y_{4} }\right\}
\end {eqnarray*}
and
\begin{eqnarray*}
\left(\ker F_{*}\right)^\perp=span\left\{H_{1}=\cos\alpha\frac{\partial}{\partial x_{1}}-\sin\alpha\frac{\partial}{\partial x_{2} }, H_{2}=\frac{1}{\sqrt{2}}\left(\frac{\partial}{\partial x_{3} }+\frac{\partial}{\partial x_{4}}\right)\right.\\
\left.H_{3}=\left(\sin\beta \frac{\partial}{\partial y_{1}}+\cos\beta \frac{\partial}{\partial y_{2}}\right),
H_{4}=\frac{\partial}{\partial y_{3}}, \xi=\frac{\partial}{\partial z}\right\}
\end {eqnarray*}

Thus we obtain $D_{1}=\left\{V_{1},V_{3}\right\}$ and $D_{2}=\left\{V_{2},V_{4}\right\}$ with the angle $\theta_{1}=\beta-\alpha$ and $\theta_{2}=\frac{\pi}{4}$. So $F$ is a bi-slant submersion.
\end{example}

Let $F$ be a bi-slant submersion from Sasakian $\left(M,\phi,\xi,\eta,g\right)$  onto a Riemannian manifold $(N,g')$. Then for $U\in\Gamma\left(\ker F_{*}\right)$, we have
\begin{equation}
U=PU+QU
\end{equation}
where $PU\in\Gamma\left(D_{1}\right)$ and $QU\in\Gamma\left(D_{2}\right)$.\\
In addition, for $U\in\Gamma\left(\ker F_{*}\right)$, we get
\begin{equation}
\phi U=\varphi U+\omega U
\end{equation}
where $\phi U\in\Gamma\left(\ker F_{*}\right)$ and $\omega U\in\Gamma\left(\ker F_{*}\right)^\perp$.\\
Similarly, for $X\in\Gamma\left(\ker F_{*}\right)^\perp$, we can write
\begin{equation}
\phi X=BX+CX
\end{equation}
where $BX\in\Gamma\left(\ker F_{*}\right)$ and $CX\in\Gamma\left(\ker F_{*}\right)^\perp$.\\
The horizontal distribution $(\ker F_{*})^{\perp}$ is decompesed as
\begin{align}
(\ker F_{*})^{\perp}=\omega D_{1}\oplus\omega D_{2}\oplus\mu
\end{align}
where $\mu$ is the complementary distribution to $\omega D_{1}\oplus\omega D_{2}$ in $(\ker F_{*})^{\perp}$ and contains $\xi$. Also it is invariant distribution $\left(\ker F_{*}\right)^{\perp}$.\\
From (3.1), (3.2) and (3.3) we arrive following equations
\begin{align}
\varphi D_{1}=D_{1}, \ \ \varphi D_{2}=D_{2}, \ \ B\omega D_{1}=D_{1}, \ \ B\omega D_{2}=D_{2}.
\end{align}
Now we can give the following theorem by using Definition 3.1 and the equation (3.2).
\begin{theorem}
Let $F$ be a Riemannian submersion from a Sasakian manifold $\left(M,\phi,\xi,\eta,g\right)$ onto a Riemannian manifold $(N,g')$. Then $F$ is a bi-slant submersion if and only if there exist bi-slant angle $\theta_{i}$ defined on $D_{i}$ such that
\begin{align*}
\varphi^{2}=-\left(\cos^{2}\theta_{i}\right)I, \  i=1,2
\end{align*}
\end{theorem}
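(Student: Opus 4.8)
The plan is to adapt the classical slant-submanifold characterisation of Chen, applied separately to the two distributions $D_{1}$ and $D_{2}$. Two structural facts will be used repeatedly. First, since $\xi$ is horizontal ($\xi\in\Gamma(\mu)$), every vertical $U$ satisfies $\eta(U)=g(U,\xi)=0$, so $(2.2)$ gives $\|\phi U\|^{2}=\|U\|^{2}$; in particular $\phi U\neq 0$ whenever $U\neq 0$ (otherwise $\phi^{2}U=0$ forces $U=\eta(U)\xi=0$). Second, $\phi$ is skew-symmetric for $g$ (because $\Phi(X,Y)=g(X,\phi Y)$ is a $2$-form), whence $\varphi$ is skew-symmetric on $\ker F_{*}$ and $\varphi^{2}$ is a self-adjoint endomorphism there; moreover $\varphi(D_{i})\subseteq D_{i}$ by $(3.6)$.

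Next I would compute the defining angle explicitly. Fix a nonzero $U\in\Gamma(D_{i})$ and write $\phi U=\varphi U+\omega U$. Since $\varphi U\in\Gamma(D_{i})$ and $\omega U\perp\ker F_{*}$, the orthogonal projection of $\phi U$ onto $D_{i}$ is precisely $\varphi U$, so
\[
\cos\theta_{i}(U)=\frac{\|\varphi U\|}{\|\phi U\|}=\frac{\|\varphi U\|}{\|U\|}.
\]
On the other hand, using $\omega U\perp\varphi U$, the skew-symmetry of $\phi$, and $\omega\varphi U\perp U$,
\[
\|\varphi U\|^{2}=g(\varphi U,\phi U)=-g(U,\phi\varphi U)=-g(U,\varphi^{2}U),
\]
so that altogether $g(\varphi^{2}U,U)=-\cos^{2}\theta_{i}(U)\,\|U\|^{2}$ for every $U\in\Gamma(D_{i})$.

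For the direct implication, assume $F$ is bi-slant, so $\theta_{i}(U)\equiv\theta_{i}$ is constant on $D_{i}$; then the identity above reads $g\big((\varphi^{2}+\cos^{2}\theta_{i}\,I)U,U\big)=0$ for all $U\in\Gamma(D_{i})$. Because $\varphi^{2}+\cos^{2}\theta_{i}\,I$ is self-adjoint on $D_{i}$ and its quadratic form vanishes identically, polarisation forces it to be the zero endomorphism, i.e. $\varphi^{2}=-(\cos^{2}\theta_{i})I$ on $D_{i}$. Conversely, if $\varphi^{2}=-(\cos^{2}\theta_{i})I$ on $D_{i}$ for some constant $\theta_{i}$, then $\|\varphi U\|^{2}=\cos^{2}\theta_{i}\,\|U\|^{2}$, and the first display yields $\cos\theta_{i}(U)=\cos\theta_{i}$ for every nonzero $U\in\Gamma(D_{i})$; hence the angle on $D_{i}$ is independent of both point and vector. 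Carrying this out for $i=1,2$ shows $F$ is bi-slant.

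The computations are routine; the only points requiring care are bookkeeping ones: using that $\xi$ is horizontal so that $\eta$ annihilates vertical vectors (needed for $\|\phi U\|=\|U\|$); correctly reading the defining angle as the angle between $\phi U$ and the subspace $D_{i}$ rather than all of $\ker F_{*}$, which is exactly where $\varphi(D_{i})\subseteq D_{i}$ (equivalently condition (iii) of Definition 3.1, via $(3.6)$) enters; and the standard self-adjointness-plus-polarisation step that upgrades the scalar identity $g(\varphi^{2}U,U)=-\cos^{2}\theta_{i}\|U\|^{2}$ to the operator identity $\varphi^{2}=-(\cos^{2}\theta_{i})I$. None of these is a genuine obstacle.
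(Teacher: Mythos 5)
Your forward direction is sound, and it is essentially the Chen/Cabrerizo-style argument the paper gestures at (the paper itself gives no proof beyond citing Cabrerizo et al.): for $U\in\Gamma(D_i)$, condition (iii) of Definition 3.1 gives $\varphi U\in\Gamma(D_i)$, so the projection of $\phi U$ onto $D_i$ is $\varphi U$, and your identity $\|\varphi U\|^{2}=-g(\varphi^{2}U,U)$ together with skew-symmetry of $\varphi$ and polarisation yields $\varphi^{2}=-(\cos^{2}\theta_i)I$ on $D_i$. The use of $\eta(U)=0$ for vertical $U$ (since $\xi$ is horizontal) to get $\|\phi U\|=\|U\|$ is also correct.

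The converse, however, has a genuine gap: you invoke $\varphi(D_i)\subseteq D_i$ (the paper's relation (3.5); you cite it as (3.6)), but in the paper that relation is a consequence of condition (iii) of Definition 3.1, i.e.\ of $F$ already being bi-slant, which is exactly what the converse must establish — so as written the step is circular. Without that invariance, $\varphi U$ may have a component in $D_j$, $j\neq i$, so $\|\varphi U\|$ is the norm of the vertical part of $\phi U$, not of its projection onto $D_i$, and your first display $\cos\theta_i(U)=\|\varphi U\|/\|\phi U\|$ is no longer valid; moreover condition (iii) itself is never verified. The hypothesis $\varphi^{2}=-(\cos^{2}\theta_i)I$ on $D_i$ alone does not force either point: for instance, with a $\phi$-invariant kernel split as $D_1=\mathrm{span}\{\partial_{x_1},\partial_{x_2}\}$, $D_2=\mathrm{span}\{\partial_{y_1},\partial_{y_2}\}$ in the $\mathbb{R}^{2n+1}$ model, one has $\varphi^{2}=-I$ on both $D_i$ (so the operator identity holds with $\theta_i=0$), yet $\varphi D_1=D_2$, condition (iii) fails, and the true angles between $\phi U$ and $D_i$ are $\pi/2$. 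The repair is either to prove the converse under the standing hypothesis $\phi D_1\perp D_2$ and $\phi D_2\perp D_1$ (which gives $\varphi D_i\subseteq D_i$, after which your computation correctly delivers constancy of the angles), or to formulate the characterisation with the projections, $(P\varphi)^{2}=-\cos^{2}\theta_1 I$ on $D_1$ and $(Q\varphi)^{2}=-\cos^{2}\theta_2 I$ on $D_2$, as is done in Cabrerizo et al.; some such adjustment is needed before the "if" direction is a proof.
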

\begin{proof}
The proof of this theorem is the similar to semi-slant submanifolds. \cite{cabrerizo}.
\end{proof}

\begin{theorem}
Let $F$ be a bi-slant submersion from a Sasakian manifold $\left(M,\phi,\xi,\eta,g\right)$ onto a Riemannian manifold $(N,g')$ with bi-slant angles $\theta_{1},\theta_{2}$.
\end{theorem}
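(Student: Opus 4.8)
The statement here is the metric‑compatibility result that customarily accompanies Theorem~3.1: for $i=1,2$ and any $U,V\in\Gamma(D_{i})$ one expects
\begin{align*}
g(\varphi U,\varphi V)&=\cos^{2}\theta_{i}\,g(U,V),\\
g(\omega U,\omega V)&=\sin^{2}\theta_{i}\,g(U,V),
\end{align*}
together with the induced operator relations $B\omega U=-(\sin^{2}\theta_{i})U$ and $\omega\varphi U+C\omega U=0$ on $D_{i}$. My plan is to reduce everything to three ingredients already available: the skew‑symmetry $g(\phi E,E')=-g(E,\phi E')$ for all $E,E'\in\Gamma(TM)$ (immediate since $\Phi(E,E')=g(E,\phi E')$ is a $2$‑form, hence alternating), the compatibility identity (2.2), and the relation $\varphi^{2}=-(\cos^{2}\theta_{i})I$ on $D_{i}$ from Theorem~3.1; throughout I will repeatedly use the orthogonality of $\ker F_{*}$ and $(\ker F_{*})^{\perp}$ to discard mixed pairings.

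First I would compute $g(\varphi U,\varphi V)$ for $U,V\in\Gamma(D_{i})$. Writing $\varphi U=\phi U-\omega U$ and noting that $\omega U\in\Gamma((\ker F_{*})^{\perp})$ while $\varphi V\in\Gamma(D_{i})\subset\Gamma(\ker F_{*})$, the term $g(\omega U,\varphi V)$ drops out and $g(\varphi U,\varphi V)=g(\phi U,\varphi V)$. Applying skew‑symmetry gives $-g(U,\phi\varphi V)$, and decomposing $\phi\varphi V=\varphi^{2}V+\omega\varphi V=-(\cos^{2}\theta_{i})V+\omega\varphi V$ by (3.2) and Theorem~3.1, the horizontal summand $\omega\varphi V$ is annihilated upon pairing with the vertical vector $U$. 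This yields $g(\varphi U,\varphi V)=\cos^{2}\theta_{i}\,g(U,V)$.

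For the second identity I would invoke the $\xi^{\perp}$ hypothesis: since $\xi\in\Gamma((\ker F_{*})^{\perp})$ we have $\eta(U)=g(U,\xi)=0$ for every $U\in\Gamma(\ker F_{*})$, so (2.2) reduces to $g(U,V)=g(\phi U,\phi V)$. Expanding $\phi U=\varphi U+\omega U$ and $\phi V=\varphi V+\omega V$, the cross terms vanish by the vertical/horizontal splitting, whence $g(U,V)=g(\varphi U,\varphi V)+g(\omega U,\omega V)$; substituting the first identity gives $g(\omega U,\omega V)=(1-\cos^{2}\theta_{i})g(U,V)=\sin^{2}\theta_{i}\,g(U,V)$. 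The operator relations, if included, follow in the same spirit by applying $\phi$ to $\phi U=\varphi U+\omega U$, using $\phi^{2}U=-U$ (again $\eta(U)=0$), and separating tangential and normal parts.

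I do not anticipate a genuine obstacle: the whole argument is bookkeeping of tangential and normal components once Theorem~3.1 is in hand. The only points requiring attention are that the $\xi^{\perp}$ assumption is precisely what removes any $\eta(U)\eta(V)$ correction terms, and that one must keep $U,V$ inside a single slant component $D_{i}$ so that $\varphi U,\varphi V$ stay in $D_{i}$ and Theorem~3.1 applies with the correct angle $\theta_{i}$.
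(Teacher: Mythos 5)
The statement you were handed is truncated by a typesetting slip in the source: the \verb|theorem| environment closes right after the hypothesis, and the actual assertion sits in the enumerate immediately following it (the paper's proof explicitly refers to parts (i) and (ii) of that list). The theorem is \emph{not} the metric-compatibility lemma you supplied; it is the integrability characterization of the two slant distributions: $D_{1}$ is integrable if and only if
$g\left(\mathcal{T}_{U}\omega\varphi V+\mathcal{T}_{V}\omega\varphi U,W\right)=g\left(\mathcal{T}_{U}\omega V+\mathcal{T}_{V}\omega U,\varphi W\right)+g\left(\mathcal{H}\nabla_{U}\omega V+\mathcal{H}\nabla_{V}\omega U,\omega W\right)$
for $U,V\in\Gamma(D_{1})$, $W\in\Gamma(D_{2})$, together with the analogous condition for $D_{2}$. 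Your identities $g(\varphi U,\varphi V)=\cos^{2}\theta_{i}\,g(U,V)$ and $g(\omega U,\omega V)=\sin^{2}\theta_{i}\,g(U,V)$ are true, and your derivation of them from Theorem 3.1, the splitting $\phi U=\varphi U+\omega U$, and $\eta(U)=0$ on $\ker F_{*}$ is sound; but they are auxiliary facts and do not establish (or even address) the integrability criterion, so the proposal misses the content of the theorem.

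What the paper actually does, and what your proposal lacks, is a bracket computation. For $U,V\in\Gamma(D_{1})$ the bracket $[U,V]$ is vertical, so $g([U,V],X)=0$ for all horizontal $X$ automatically; integrability of $D_{1}$ therefore reduces to $g([U,V],W)=0$ for every $W\in\Gamma(D_{2})$. One writes $g(\nabla_{U}V,W)=g(\phi\nabla_{U}V,\phi W)$ (no $\eta$-corrections, since $\xi$ is horizontal and $V,W$ are vertical), moves $\phi$ across the connection using the Sasakian structure equation $(\nabla_{U}\phi)V=g(U,V)\xi-\eta(V)V$-type terms, which die against vertical vectors, splits $\phi V=\varphi V+\omega V$, and uses $\phi\varphi V=\varphi^{2}V+\omega\varphi V=-\cos^{2}\theta_{1}V+\omega\varphi V$ from Theorem 3.1 to move the $\cos^{2}\theta_{1}$ term to the left-hand side; antisymmetrizing in $U,V$ and invoking (2.9) to decompose $\nabla_{U}(\text{horizontal})$ into $\mathcal{T}_{U}+\mathcal{H}\nabla_{U}$ yields
$\sin^{2}\theta_{1}\,g([U,V],W)=-g\left(\mathcal{T}_{U}\omega\varphi V+\mathcal{T}_{V}\omega\varphi U,W\right)+g\left(\mathcal{T}_{U}\omega V+\mathcal{T}_{V}\omega U,\varphi W\right)+g\left(\mathcal{H}\nabla_{U}\omega V+\mathcal{H}\nabla_{V}\omega U,\omega W\right)$,
from which the equivalence is immediate, and the $D_{2}$ case is symmetric with $\theta_{2}$. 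Your lemma could serve as a supporting observation elsewhere, but to prove this theorem you need the reduction to $g([U,V],W)=0$ and the computation above.
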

	\begin{enumerate}[i)]
	\item $D_{1}$ is integrable if and only if
	\begin{align*}
	g\left(T_{U}\omega\varphi V+T_{V}\omega\varphi U, W\right)=&g\left(T_{U}\omega V+T_{V}\omega U,\varphi W\right)\\&+g\left(\mathcal{H}\nabla_{U}\omega V+\mathcal{H}\nabla_{V}\omega U,\omega W\right)
	\end{align*}
	\item $D_{2}$ is integrable if and only if
	\begin{align*}
	g\left(T_{W}\omega\varphi Z+T_{Z}\omega\varphi W, U\right)=&g\left(T_{W}\omega Z+T_{Z}\omega W,\varphi U\right)\\&+g\left(\mathcal{H}\nabla_{W}\omega Z+\mathcal{H}\nabla_{Z}\omega W,\omega U\right)
	\end{align*}
	for $U,V\in\Gamma\left(D_{1}\right)$ and $W,Z\in\Gamma\left(D_{2}\right)$.
\end{enumerate}
\begin{proof}
	For $U,V\in\Gamma\left(D_{1}\right)$ and $X\in\Gamma\left(\left(\ker F_{*}\right)^{\perp}\right)$, since $g\left([U,V],X\right)=0$, it is enough to show $g\left([U,V],Z\right)=0$ for $Z\in\Gamma\left(D_{2}\right)$. Then since $M$ is a Sasakian manifold we get
	\begin{align*}
	g\left([U,V],W\right)=&-g\left(\nabla_{U}\phi\psi V,W\right)+g\left(\nabla_{U}\omega V,\phi W\right)
	\\&+g\left(\nabla_{V}\phi\psi U,W\right)-g\left(\nabla_{V}\omega U,\phi W\right).
	\end{align*}
Theorem 3.1 and the equation (2.9) imply that
\begin{align*}
\sin^{2}\theta_{1} g\left([U,V],W\right)=&-g\left(T_{U}\omega\varphi V+T_{V}\omega\varphi U, W\right)+g\left(T_{U}\omega V+T_{V}\omega U,\varphi W\right)\\&+g\left(\mathcal{H}\nabla_{U}\omega V+\mathcal{H}\nabla_{V}\omega U,\omega W\right)	
\end{align*}
Similarly for $W,Z\in\Gamma\left(D_{2}\right)$ and $U\in\Gamma\left(D_{1}\right)$ it can be shown that
\begin{align*}
\sin^{2}\theta_{2} g\left([W,Z],U\right)=&-g\left(T_{W}\omega\varphi Z+T_{Z}\omega\varphi W, U\right)+g\left(T_{W}\omega Z+T_{Z}\omega W,\varphi U\right)\\&+g\left(\mathcal{H}\nabla_{W}\omega Z+\mathcal{H}\nabla_{Z}\omega W,\omega U\right).
\end{align*}
which proves (i). The proof of (ii) can be found in a similar way.
\end{proof}

\begin{theorem}
	Let $F$ be a bi-slant submersion from a Sasakian manifold $\left(M,\phi,\xi,\eta,g\right)$ onto a Riemannian manifold $(N,g')$ with bi-slant angles $\theta_{1},\theta_{2}$. Then $\left(\ker F_{*}\right)^{\perp}$ is integrable if and only if
	\begin{align*}
	g\left(A_{Y}BX-A_{X}BY,\omega U\right)=&g\left(\mathcal{H}\nabla_{X}CY-\mathcal{H}\nabla_{Y}CX,\omega U\right)+\eta(Y)g\left(Y,\omega U\right)\\&-\eta(X)g\left(Y,\omega U\right)-g\left([X,Y],\omega\varphi U\right)	
	\end{align*}
for $X,Y\in\Gamma\left(\left(\ker F_{*}\right)^{\perp}\right)$ and $U\in\Gamma\left(\ker F_{*}\right)$.
\end{theorem}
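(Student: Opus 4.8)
The plan is to compute $g([X,Y],V)$ for arbitrary $V\in\Gamma(\ker F_*)$ and show it reduces to the stated identity. Since $[X,Y]$ is automatically horizontal-compatible against $(\ker F_*)^\perp$ (the bracket tested against horizontal vectors gives no information about integrability of the horizontal distribution in the naive sense — rather, $(\ker F_*)^\perp$ is integrable iff $[X,Y]$ has no vertical component), it suffices to force $\mathcal V[X,Y]=0$, i.e. $g([X,Y],V)=0$ for all vertical $V$. Because $D_1$ and $D_2$ together span $\ker F_*$ and $\varphi$ is, by Theorem 3.1, invertible on each $D_i$ (with $\varphi^2=-\cos^2\theta_i I$), it is enough to test against $V$ of the form $V=\varphi U$ plus the $\omega U$-language; concretely I would write $g([X,Y],V)$ in terms of $\omega U$ and $\varphi U$ by using the slant machinery, so that the final identity is phrased with $\omega U$ and $\omega\varphi U$ as in the statement.

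The key computational steps, in order: first, write $g(\nabla_X Y,V)=-g(Y,\nabla_X V)$ and similarly with $X,Y$ swapped, then subtract. Second, replace $V$ by $-\phi^2 V + \eta(V)\xi$ using (2.1), and since $V$ is vertical and $\xi$ is horizontal (this is a $\xi^\perp$-submersion), write $g(Y,\nabla_X V)=g(Y,\nabla_X(-\phi^2 V))+ (\text{$\xi$-terms})$; push one $\phi$ across using the Sasakian structure equation $(\nabla_X\phi)V=g(X,V)\xi-\eta(V)X$, noting $g(X,V)=0$ since $X$ is horizontal and $V$ vertical, so $\nabla_X(\phi V)=\phi\nabla_X V -\eta(V)X$. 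Third, decompose $\phi V=\varphi V+\omega V$ with $\varphi V\in\ker F_*$, $\omega V\in(\ker F_*)^\perp$, and $\phi X=BX+CX$, $\phi Y=BY+CY$; using compatibility (2.2) and the structure equation repeatedly, the terms $g(\nabla_X\phi V,\phi Y)$ turn into $g(\nabla_X(\varphi V+\omega V), BY+CY)$, which by (2.10)–(2.13) splits into $\mathcal A$-terms (from $\mathcal A_X BY$, since $BY$ is vertical) and $\mathcal H\nabla_X CY$-terms (since $CY$ is horizontal). Fourth, collect: the $\mathcal A$-terms assemble into $g(A_Y BX - A_X BY,\omega U)$-type expressions after relabeling $V$ appropriately, the $\mathcal H\nabla$-terms into $g(\mathcal H\nabla_X CY-\mathcal H\nabla_Y CX,\omega U)$, the $\xi$-corrections (coming from $\nabla_X\xi=-\phi X$ and the $\eta(V)X$ remainder) produce the $\eta(X)$, $\eta(Y)$ terms, and one leftover bracket term $g([X,Y],\omega\varphi U)$ survives from the piece where $\phi$ is pulled back onto the $\varphi U$ component. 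Finally, use $\varphi^2 = -\cos^2\theta_i I$ on each $D_i$ to rewrite the coefficient and absorb the $\cos^2$/$\sin^2$ factor, arriving at the displayed equivalence.

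The main obstacle I anticipate is bookkeeping rather than conceptual: keeping straight which components land in $\ker F_*$ versus $(\ker F_*)^\perp$ after each application of $\phi$, and correctly using $\mathcal H\nabla_X(\text{horizontal})$ versus $\mathcal A_X(\text{vertical})$ in (2.11)–(2.13) — in particular the substitution $\mathcal H\nabla_U X=\mathcal A_X U$ for basic $X$ must be invoked with care, and one must remember that $\mathcal A_X$ is skew in the appropriate sense (2.7) to get the differences $A_Y BX - A_X BY$ with the right signs. A secondary subtlety is justifying the reduction to testing only against $\omega U$ and $\omega\varphi U$: this relies on $\{\varphi U, \omega U : U\in\ker F_*\}$ spanning enough of $TM$ via $\varphi$ being an isomorphism on each slant distribution $D_i$, which is exactly Theorem 3.1, so the argument must quote that theorem at the point of reduction. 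Once the component analysis is organized, the rest is routine application of the O'Neill identities and the Sasakian structure equations.
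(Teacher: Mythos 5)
Your plan follows essentially the same route as the paper's proof: pair $[X,Y]$ with a vertical $U$, insert $\phi$ via the Sasakian compatibility and structure equations, decompose with $\varphi,\omega,B,C$, and convert the resulting terms using the O'Neill formulas and Theorem 3.1, so that $\sin^{2}\theta_{i}\,g\left([X,Y],U\right)$ equals the stated right-hand side. The ``reduction'' you worry about is not needed --- one simply tests $\mathcal{V}[X,Y]$ against every vertical $U$ (handling $U\in\Gamma(D_{1})$ and $U\in\Gamma(D_{2})$ separately, so the overall factor is $\sin^{2}\theta_{1}$ or $\sin^{2}\theta_{2}$), which is exactly what the paper does.
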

\begin{proof}
	For $X,Y\in\Gamma\left(\left(\ker F_{*}\right)^{\perp}\right)$ and $U\in\Gamma\left(\ker F_{*}\right)$. Then since $M$ is a Sasakian manifold we get
\begin{align*}
g\left([X,Y],U\right)=&-g\left(\nabla_{X}Y,\phi\varphi U\right)+g\left(\phi\nabla_{X}\omega Y,\omega U\right)
\\&+g\left(\nabla_{Y}X,\phi\varphi U\right)-g\left(\phi\nabla_{Y}\omega X,\omega U\right).
\end{align*}
From Theorem 3.1 we deduce that
\begin{align*}
\sin^{2}\theta_{1} g\left([X,Y],U\right)=&\left(\cos^{2}\theta_{2}-\cos^{2}\theta_{1}\right)g\left([X,Y],QU\right)-g\left(\nabla_{X}Y,\omega\varphi U\right)\\&+g\left(\nabla_{Y}X,\omega\varphi U\right)+g\left(\nabla_{X}\phi Y,\omega U\right)+\eta(Y)g\left(X,\omega U\right)\\&-g\left(\nabla_{Y}\phi X,\omega U\right)-\eta(X)g\left(Y,\omega U\right)
\end{align*}
Then from the equation (2.11), we have
\begin{align*}
\sin^{2}\theta_{1} g\left([X,Y],U\right)=&\left(\cos^{2}\theta_{2}-\cos^{2}\theta_{1}\right)g\left([X,Y],QU\right)-g\left([X,Y],\omega\varphi U\right)\\&+g\left(A_{X}BY,\omega U\right)+g\left(\mathcal{H}\nabla_{X}CY,\omega U\right)-g\left(A_{Y}BX,\omega U\right)\\&-g\left(\mathcal{H}\nabla_{Y}CX,\omega U\right)-\eta(X)g\left(Y,\omega U\right)+\eta(Y)g\left(X,\omega U\right)
\end{align*}
which gives the desired equation.
\end{proof}

\begin{theorem}
Let $F$ be a bi-slant submersion from a Sasakian manifold $\left(M,\phi,\xi,\eta,g\right)$ onto a Riemannian manifold $(N,g')$ with bi-slant angles $\theta_{1},\theta_{2}$. Then the distribution $D_{1}$ defines a totally geodesic foliation if and only if
\begin{eqnarray*}
 -g\left(T_{U}\omega\varphi V,W\right)=g\left(T_{U}\omega V,\varphi W\right)+g\left(\mathcal{H}\nabla_{U}\omega V,\omega W\right)
\end{eqnarray*}
and
\begin{align*}
g\left(T_{U}\omega V,BX\right)=g\left(\mathcal{H}\nabla_{U}\omega\varphi V,X\right)-g\left(\mathcal{H}\nabla_{U}\omega V,CX\right)
\end{align*}
where $U,V \in D_{1}$, $W\in D_{2}$ and $X\in \Gamma\left(\left(\ker F_{*}\right)^{\perp}\right)$.
\end{theorem}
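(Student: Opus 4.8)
The statement characterizes when $D_1$ defines a totally geodesic foliation, which means $\nabla_U V \in \Gamma(D_1)$ for all $U, V \in \Gamma(D_1)$. Since $\ker F_* = D_1 \oplus D_2$ and the ambient splitting is $TM = D_1 \oplus D_2 \oplus (\ker F_*)^{\perp}$, the condition $\nabla_U V \in \Gamma(D_1)$ is equivalent to the pair of orthogonality conditions $g(\nabla_U V, W) = 0$ for all $W \in \Gamma(D_2)$ together with $g(\nabla_U V, X) = 0$ for all $X \in \Gamma((\ker F_*)^{\perp})$. The plan is to handle these two conditions separately, and each will produce one of the two displayed equations.

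First I would treat $g(\nabla_U V, W)$ with $U, V \in \Gamma(D_1)$, $W \in \Gamma(D_2)$. Writing $\nabla_U V = \phi(\nabla_U V)$ essentially by applying $\phi^2 = -I + \eta \otimes \xi$ (noting $\eta(\nabla_U V) = 0$ on the vertical part, or tracking the $\xi$ term carefully), I would push the Sasakian structure equation $(\nabla_X \phi)Y = g(X,Y)\xi - \eta(Y)X$ through: $g(\nabla_U V, W) = g(\nabla_U \phi V, \phi W) + (\text{terms with } \xi)$, using $g(\phi \cdot, \phi \cdot) = g(\cdot,\cdot) - \eta\eta$ and $\phi W$'s decomposition $\phi W = \varphi W + \omega W$ with $\varphi W \in D_2$. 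Then $\phi V = \varphi V + \omega V$, and $\varphi V \in D_1$. Differentiating and using Theorem 3.1 ($\varphi^2 = -\cos^2\theta_1 \, I$ on $D_1$) to replace $\varphi^2 V$ by $-\cos^2\theta_1 V$, a $\sin^2\theta_1$ factor should appear in front of $g(\nabla_U V, W)$, exactly as in the integrability computation of Theorem 3.4. Collecting the remaining terms and rewriting $\nabla_U \omega V$, $\nabla_U \omega\varphi V$ via equation (2.10) ($\nabla_U X = \mathcal{H}\nabla_U X + \mathcal{T}_U X$) yields the first displayed equation $-g(\mathcal{T}_U \omega\varphi V, W) = g(\mathcal{T}_U \omega V, \varphi W) + g(\mathcal{H}\nabla_U \omega V, \omega W)$.

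Second, for $g(\nabla_U V, X)$ with $X \in \Gamma((\ker F_*)^{\perp})$, I would again insert $\phi^2$ and move $\phi$ across via $(\nabla_U \phi)V$: $g(\nabla_U V, X) = g(\phi \nabla_U \phi V, \phi X) + \cdots$. Expanding $\phi V = \varphi V + \omega V$ and differentiating, $g(\nabla_U \varphi V + \nabla_U \omega V, \phi X)$; apply $\phi$ once more to $\varphi V$ to get $\varphi^2 V + \omega\varphi V$, substitute $\varphi^2 V = -\cos^2\theta_1 V$ from Theorem 3.1 to isolate a $\sin^2\theta_1$ coefficient, and finally rewrite the mixed terms using (2.10) together with $\phi X = BX + CX$ (equation (3.3)), so that $g(\mathcal{T}_U \omega V, BX)$, $g(\mathcal{H}\nabla_U \omega\varphi V, X)$, $g(\mathcal{H}\nabla_U \omega V, CX)$ emerge. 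This gives the second displayed equation. Throughout I would use that $\mathcal{T}$ is symmetric on vertical vectors and the metric compatibility relations; the $\xi$-contributions are controlled because $\xi \in \mu \subset (\ker F_*)^{\perp}$ and $\eta$ vanishes on vertical fields, so those terms drop or are absorbed.

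The main obstacle I anticipate is bookkeeping rather than conceptual: correctly tracking which pieces of $\phi$ applied to a vertical vector land back in $D_1$ versus $D_2$ versus the horizontal space, and ensuring the $\sin^2\theta_1 = 1 - \cos^2\theta_1$ factor is extracted cleanly from the repeated use of $\varphi^2 = -\cos^2\theta_1 I$ without spurious cross terms between $D_1$ and $D_2$ (here condition (iii) of Definition 3.1, $\phi D_1 \perp D_2$, is what guarantees those cross terms vanish). Once the coefficient $\sin^2\theta_1$ is factored out and $\theta_1 \neq 0$ for a proper submersion (so we may divide), the equivalence "$D_1$ totally geodesic $\iff$ both displayed identities hold" follows by reading the two orthogonality conditions off the two resulting formulas.
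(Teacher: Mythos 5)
Your proposal is correct and follows essentially the same route as the paper: reduce total geodesicness of $D_{1}$ to the vanishing of $g\left(\nabla_{U}V,W\right)$ and $g\left(\nabla_{U}V,X\right)$, insert $\phi$ via the compatibility and Sasakian structure equations, use the decompositions $\phi V=\varphi V+\omega V$, $\phi X=BX+CX$ together with Theorem 3.1 to extract the factor $\sin^{2}\theta_{1}$, and rewrite with the O'Neill formulas to obtain the two displayed identities. Your handling of the $\xi$-terms (they cancel because $\eta$ vanishes on vertical fields and $\varphi$ is skew on $\ker F_{*}$) matches what the paper does implicitly, so no further comment is needed.
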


\begin{proof}
From the equations (2.1), (2.2) and (3.2) for any $U,V \in D_{1} $ and $W \in D_{2}$ we can write
\setlength\arraycolsep{2pt}
\begin{eqnarray*}
g\left(\nabla_{U}V, W\right)&=&-g\left(\phi\nabla_{U}V,\phi W\right)\\
&=&-g\left(\phi\nabla_{U} \psi V, W\right)+g\left(\nabla_{U}\omega V,\phi W\right)
\end{eqnarray*}
Then Theorem 3.1 implies that
\begin{eqnarray}
\sin^2\theta_{1}g\left(\nabla_{U}V,W\right)&=&-g\left(\nabla_{U}\omega\varphi V, W\right)+g\left(\nabla_{U}\omega V,\phi W\right) \nonumber
\end{eqnarray}
Hence by using the equation (2.9) we have
\begin{eqnarray*}
\sin^2\theta_{1}g\left(\nabla_{U}V,W\right)&=&g\left(\mathcal{H}\nabla_{U}\omega V,\omega W\right)+g\left(\mathcal{T}_{U}\omega V,\varphi W\right) \nonumber \\
&-&g\left(\mathcal{T}_{U}\omega\varphi V,W\right).
\end{eqnarray*}
which proves the first equation.
On the other hand, for $X\in \Gamma\left(\left(\ker F_{*}\right)^{\perp}\right)$, we derive
\begin{align*}
g\left(\nabla_{U}V,X\right)=&g\left(\nabla_{U}\phi V,\phi X\right)+g\left(V,\phi U\right)\eta(X)\\
=&-g\left(\phi\nabla_{U}\psi V,X\right)+g\left(\nabla_{U}\omega V,\phi X\right)+g\left(V,\phi U\right)\eta(X).
\end{align*}
Considering Theorem 3.1 we arrive at
\begin{align*}
\sin^{2}\theta_{1}g\left(\nabla_{U}V,X\right)=-g\left(\nabla_{U}\omega\varphi V,X\right)+g\left(\nabla_{U}\omega V,\phi X\right).
\end{align*}
From (2.9) we have
\begin{align*}
\sin^{2}\theta_{1}g\left(\nabla_{U}V,X\right)=&-g\left(\mathcal{H}\nabla_{U}\omega\psi V,X\right)+g\left(\mathcal{H}\nabla_{U}\omega V,CX\right)+g\left(T_{U}\omega V,BX\right)
\end{align*}
which gives the second equation.
\end{proof}

\begin{theorem}
Let $F$ be a bi-slant submersion from a Sasakian manifold $\left(M,\phi,\xi,\eta,g\right)$ onto a Riemannian manifold $(N,g')$ with bi-slant angles $\theta_{1},\theta_{2}$. Then the distribution $D_{2}$ defines a totally geodesic foliation if and only if
\begin{eqnarray*}
-g\left(T_{W}\omega\varphi Z,U\right)=g\left(T_{W}\omega Z,\varphi U\right)+g\left(\mathcal{H}\nabla_{W}\omega Z,\omega U\right)
\end{eqnarray*}
and
\begin{align*}
g\left(T_{W}\omega Z,BX\right)=g\left(\mathcal{H}\nabla_{W}\omega\varphi Z,X\right)-g\left(\mathcal{H}\nabla_{W}\omega Z,CX\right)
\end{align*}
where $U\in D_{1}$, $W,Z\in D_{2}$ and $X\in \Gamma\left(\left(\ker F_{*}\right)^{\perp}\right)$.
\end{theorem}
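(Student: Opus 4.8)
The plan is to mirror the proof of Theorem 3.5 with the roles of $D_{1}$ and $D_{2}$ (hence of $\theta_{1}$ and $\theta_{2}$) interchanged. The orthogonal complement of $D_{2}$ in $TM$ is $D_{1}\oplus(\ker F_{*})^{\perp}$, so $D_{2}$ defines a totally geodesic foliation exactly when $g(\nabla_{W}Z,U)=0$ and $g(\nabla_{W}Z,X)=0$ for all $W,Z\in\Gamma(D_{2})$, $U\in\Gamma(D_{1})$, and $X\in\Gamma((\ker F_{*})^{\perp})$; these two vanishing conditions will produce the two displayed identities.

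For the vertical part, I would use (2.2) together with $\eta(U)=0$ (valid since $U$ is vertical while $\xi\in\mu\subset(\ker F_{*})^{\perp}$) to write $g(\nabla_{W}Z,U)=g(\phi\nabla_{W}Z,\phi U)$, then expand $\phi\nabla_{W}Z=\nabla_{W}\phi Z-(\nabla_{W}\phi)Z$ using the Sasakian identity $(\nabla_{W}\phi)Z=g(W,Z)\xi-\eta(Z)W=g(W,Z)\xi$, and use $\phi Z=\varphi Z+\omega Z$ together with the skew-symmetry $g(\cdot,\phi\cdot)=-g(\phi\cdot,\cdot)$ (the term $g(W,Z)\,g(\xi,\phi U)$ dies). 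Applying the same step to $\phi\nabla_{W}\varphi Z$ and invoking Theorem 3.1, i.e.\ $\varphi^{2}=-(\cos^{2}\theta_{2})I$ on $D_{2}$, reduces this to $\sin^{2}\theta_{2}\,g(\nabla_{W}Z,U)=-g(\nabla_{W}\omega\varphi Z,U)+g(\nabla_{W}\omega Z,\phi U)$. Then (2.9) gives $g(\nabla_{W}\omega\varphi Z,U)=g(\mathcal{T}_{W}\omega\varphi Z,U)$, and splitting $\phi U=\varphi U+\omega U$ (again via (2.9)) yields the first asserted equation.

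For the horizontal part, the only new feature is that $\eta(X)$ need not vanish: one starts from $g(\nabla_{W}Z,X)=g(\nabla_{W}\phi Z,\phi X)+g(Z,\phi W)\eta(X)$ (the last summand coming from $\eta(\nabla_{W}Z)=g(Z,\phi W)$, which follows from $\nabla_{W}\xi=-\phi W$), repeats the computation above with $\phi X=BX+CX$, Theorem 3.1, and (2.9), and checks that the resulting extra terms $g(W,\varphi Z)\eta(X)+g(Z,\phi W)\eta(X)$ cancel, since $g(Z,\phi W)=g(Z,\varphi W)=-g(W,\varphi Z)$ by skew-symmetry of $\varphi$. This leaves $\sin^{2}\theta_{2}\,g(\nabla_{W}Z,X)=-g(\mathcal{H}\nabla_{W}\omega\varphi Z,X)+g(\mathcal{T}_{W}\omega Z,BX)+g(\mathcal{H}\nabla_{W}\omega Z,CX)$, which rearranges to the second asserted equation. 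I expect the only delicate point to be this bookkeeping of the $\xi$- and $\eta$-terms — in particular verifying that $\eta$ annihilates $\varphi Z$ and $BX$ and that $g(\xi,\phi(\cdot))=0$ — since everything else is a direct transcription of the argument for Theorem 3.5.
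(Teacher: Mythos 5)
Your proposal is exactly the paper's route: the paper disposes of this theorem by saying it follows from the same method as the $D_{1}$ case (Theorem 3.4) with the roles of $D_{1},D_{2}$ and $\theta_{1},\theta_{2}$ interchanged, which is precisely the mirrored computation you carry out, and your bookkeeping of the $\eta$- and $\xi$-terms (using $\eta(U)=0$, $\eta(\varphi Z)=0$, $g(\xi,\phi\,\cdot)=0$ and the cancellation $g(W,\varphi Z)\eta(X)+g(Z,\varphi W)\eta(X)=0$) is correct. The only caveat, inherited from the paper itself, is that the derived identity $\sin^{2}\theta_{2}\,g(\nabla_{W}Z,U)=-g(\mathcal{T}_{W}\omega\varphi Z,U)+g(\mathcal{T}_{W}\omega Z,\varphi U)+g(\mathcal{H}\nabla_{W}\omega Z,\omega U)$ yields, upon setting the left side to zero, $+g(\mathcal{T}_{W}\omega\varphi Z,U)$ rather than the $-g(\mathcal{T}_{W}\omega\varphi Z,U)$ written in the statement --- the same sign discrepancy already present between the statement and proof of Theorem 3.4, so it reflects a typo in the stated condition rather than a gap in your argument.
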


\begin{proof}
By using similar method in Theorem 3.4 the proof of this theorem can be easily made.
\end{proof}

\begin{theorem}
Let $F$ be a bi-slant submersion from a Sasakian manifold $\left(M,\phi,\xi,\eta,g\right)$ onto a Riemannian manifold $(N,g')$ with bi-slant angles $\theta_{1},\theta_{2}$. Then the distribution $\left(\ker F_{*}\right)^\perp$ defines a totally geodesic foliation on $M$ if and only if
\begin{eqnarray*}
\left(\cos^{2}\theta_{1}-\cos^{2}\theta_{2}\right)g\left(A_{X}Y,QU\right)&=&-g\left(\mathcal{H}\nabla_{X}Y,\omega\varphi U\right)+g\left(\omega\mathcal{A}_{X}BY,\omega U\right) \nonumber\\
&+&g\left(\mathcal{H}\nabla_{X}CY,\omega U\right)+\eta(Y)\left(X,\omega U\right)
\end{eqnarray*}
where $X,Y \in\Gamma\left(\ker F_{*}\right)^\perp$ and $U\in\left(\ker F_{*}\right)$.
\end{theorem}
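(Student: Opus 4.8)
The plan is to use the standard criterion that $\left(\ker F_{*}\right)^{\perp}$ defines a totally geodesic foliation on $M$ if and only if $g\left(\nabla_{X}Y,U\right)=0$ for all $X,Y\in\Gamma\left(\left(\ker F_{*}\right)^{\perp}\right)$ and all $U\in\Gamma\left(\ker F_{*}\right)$, and then to rewrite $g\left(\nabla_{X}Y,U\right)$ until it takes the shape $\sin^{2}\theta_{1}\,g\left(\nabla_{X}Y,U\right)=$ (the right-hand side in the statement). Since $\sin^{2}\theta_{1}\neq 0$, the vanishing of $g\left(\nabla_{X}Y,U\right)$ for all such $X,Y,U$ is then equivalent to the displayed identity, which is exactly the assertion.

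I would start from the observation that $\xi\in\Gamma(\mu)\subset\Gamma\left(\left(\ker F_{*}\right)^{\perp}\right)$, so $\eta(U)=g(U,\xi)=0$ for vertical $U$; hence (2.2) gives $g\left(\nabla_{X}Y,U\right)=g\left(\phi\nabla_{X}Y,\phi U\right)$. Splitting $\phi U=\varphi U+\omega U$ by (3.2), I treat the two summands by different devices. For the horizontal part $\omega U$: the Sasakian structure equation $\left(\nabla_{X}\phi\right)Y=g(X,Y)\xi-\eta(Y)X$ together with $\phi Y=BY+CY$ yields $\phi\nabla_{X}Y=\nabla_{X}BY+\nabla_{X}CY-g(X,Y)\xi+\eta(Y)X$; the $\xi$-term dies because $g(\xi,\omega U)=0$, and (2.11)--(2.12) turn $g\left(\nabla_{X}BY,\omega U\right)$ into $g\left(\mathcal{A}_{X}BY,\omega U\right)$ and $g\left(\nabla_{X}CY,\omega U\right)$ into $g\left(\mathcal{H}\nabla_{X}CY,\omega U\right)$, leaving also the summand $\eta(Y)g\left(X,\omega U\right)$.

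For the vertical part $\varphi U$: I would use skew-symmetry of the fundamental $2$-form, $g\left(\phi\nabla_{X}Y,\varphi U\right)=-g\left(\nabla_{X}Y,\phi\varphi U\right)$, and then decompose $\phi\varphi U=\varphi^{2}U+\omega\varphi U$ via (3.2) applied to the vertical vector $\varphi U$. Here Theorem 3.1 is the crux: writing $U=PU+QU$ with $PU\in\Gamma(D_{1})$, $QU\in\Gamma(D_{2})$, using $\varphi D_{i}=D_{i}$ from (3.6) and the constancy of the slant angles, one gets $\varphi^{2}U=-\cos^{2}\theta_{1}\,PU-\cos^{2}\theta_{2}\,QU=-\cos^{2}\theta_{1}\,U+\left(\cos^{2}\theta_{1}-\cos^{2}\theta_{2}\right)QU$, so that $-g\left(\nabla_{X}Y,\varphi^{2}U\right)=\cos^{2}\theta_{1}\,g\left(\nabla_{X}Y,U\right)-\left(\cos^{2}\theta_{1}-\cos^{2}\theta_{2}\right)g\left(\nabla_{X}Y,QU\right)$, the latter inner product being $g\left(\mathcal{A}_{X}Y,QU\right)$ by (2.12); the $\omega\varphi U$-piece contributes $-g\left(\mathcal{H}\nabla_{X}Y,\omega\varphi U\right)$, again by (2.12). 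This $\varphi^{2}$-expansion is what manufactures the coefficient $\cos^{2}\theta_{1}-\cos^{2}\theta_{2}$.

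Collecting, the two copies of $g\left(\nabla_{X}Y,U\right)$ combine to $\left(1-\cos^{2}\theta_{1}\right)g\left(\nabla_{X}Y,U\right)=\sin^{2}\theta_{1}\,g\left(\nabla_{X}Y,U\right)$, and the surviving terms on the other side are precisely $-\left(\cos^{2}\theta_{1}-\cos^{2}\theta_{2}\right)g\left(\mathcal{A}_{X}Y,QU\right)-g\left(\mathcal{H}\nabla_{X}Y,\omega\varphi U\right)+g\left(\mathcal{A}_{X}BY,\omega U\right)+g\left(\mathcal{H}\nabla_{X}CY,\omega U\right)+\eta(Y)g\left(X,\omega U\right)$ (the term $g\left(\mathcal{A}_{X}BY,\omega U\right)$ being what the statement writes as $g\left(\omega\mathcal{A}_{X}BY,\omega U\right)$, since only the $\omega D_{1}\oplus\omega D_{2}$-component of $\mathcal{A}_{X}BY$ pairs with $\omega U$); moving the first of these across gives the claimed identity. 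I expect the only genuine difficulty to be the bookkeeping of the $\varphi^{2}$-expansion: one must check that the $PU$-contribution collapses exactly to $-\cos^{2}\theta_{1}\,g\left(\nabla_{X}Y,U\right)$ and that no stray $\eta(Y)$- or vertical--vertical terms remain; the rest is a routine use of (2.9)--(2.12) and the Sasakian structure equations.
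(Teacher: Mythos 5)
Your proposal is correct and follows essentially the same route as the paper's own proof: both start from $g\left(\nabla_{X}Y,U\right)=g\left(\phi\nabla_{X}Y,\phi U\right)$ (valid since $\xi$ is horizontal), split $\phi U=\varphi U+\omega U$, expand $\varphi^{2}U$ via Theorem 3.1 and $PU=U-QU$ to produce the $\sin^{2}\theta_{1}$ and $\left(\cos^{2}\theta_{1}-\cos^{2}\theta_{2}\right)$ coefficients, and convert the remaining terms with the O'Neill formulas, the Sasakian structure equation and $\phi Y=BY+CY$. In fact you make explicit a step the paper leaves implicit (the Sasakian expansion of $g\left(\phi\nabla_{X}Y,\omega U\right)$ that yields $\eta(Y)g\left(X,\omega U\right)$) and you correctly read the statement's $g\left(\omega\mathcal{A}_{X}BY,\omega U\right)$ as $g\left(\mathcal{A}_{X}BY,\omega U\right)$, matching what the paper's proof actually derives.
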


\begin{proof}
For $X,Y \in\Gamma\left(\ker F_{*}\right)^\perp$ and $U\in\left(\ker F_{*}\right)$ we can write
\setlength\arraycolsep{2pt}
\begin{eqnarray*}
g\left(\nabla_{X}Y,U\right)&=&g\left(\phi\nabla_{X}Y,\phi U\right)\\
&=&-g\left(\nabla_{X}Y,\phi\varphi U\right)+g\left(\phi\nabla_{X}Y,\omega U\right)
\end{eqnarray*}
By using Theorem 3.1 we obtain
\begin{align*}
g\left(\nabla_{X}Y,U\right)=&\cos^2\theta_{1}g\left(\nabla_{X}Y,PU\right)+\cos^2\theta_{2}g\left(\nabla_{X}Y,QU\right)-g\left(\nabla_{X}Y,\omega\varphi U\right)\\&+g\left(\phi\nabla_{X}Y,\omega U\right)
\end{align*}
From the equations (2.11) and $PU=U-QU$ we have
\begin{align*}
\sin^{2}\theta_{1}g\left(\nabla_{X}Y,U\right)=&\left(\cos^{2}\theta_{2}-\cos^{2}\theta_{1}\right)g\left(A_{X}Y,QU\right)\\
&-g\left(\mathcal{H}\nabla_{X}Y,\omega\varphi U\right)+g\left(\mathcal{A}_{X}BY,\omega U\right)\\
&+g\left(\mathcal{H}\nabla_{X}CY,\omega U\right)+\eta\left(Y\right)g\left(X,\omega U\right)
\end{align*}
Thus we have the equation (3.10).
\end{proof}

\begin{theorem}
Let $F$ be a bi-slant submersion from a Sasakian manifold $\left(M,\phi,\xi,\eta,g\right)$ onto a Riemannian manifold $(N,g')$ with bi-slant angles $\theta_{1},\theta_{2}$. Then the distribution $\left(\ker F_{*}\right)$ defines a totally geodesic foliation on $M$ if and only if
\begin{align*}
\left(\cos^{2}\theta_{1}-\cos^{2}\theta_{2}\right)g\left(\mathcal{T}_{U}QV,X\right)=&-g\left(\mathcal{H}\nabla_{U}\omega\varphi V,X\right)+g\left(\mathcal{T}_{U}\omega V,BX\right)\nonumber\\
&+g\left(\mathcal{H}\nabla_{U}\omega V,CX\right)
\end{align*}
where $X\in\Gamma\left(\ker F_{*}\right)^\perp$ and $U,V\in\left(\ker F_{*}\right)$.
\end{theorem}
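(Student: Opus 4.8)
The statement concerns when $\ker F_*$ defines a totally geodesic foliation, which by the standard criterion is equivalent to $g(\nabla_U V, X) = 0$ for all $U,V \in \Gamma(\ker F_*)$ and $X \in \Gamma((\ker F_*)^\perp)$. So the plan is to start from $g(\nabla_U V, X)$ and massage it using the Sasakian structure equations and Theorem 3.1 exactly as in the proofs of Theorems 3.4 and 3.7. First I would write $g(\nabla_U V, X) = g(\phi \nabla_U V, \phi X) + \eta(\nabla_U V)\eta(X)$; since $U,V$ are vertical and (in the intended setup) $\xi$ is horizontal, the $\eta$-term involving $\nabla_U V$ should be handled via $g(\nabla_U V, \xi) = -g(V, \nabla_U \xi) = g(V, \phi U)$, which will contribute terms that ultimately get absorbed — or, more likely given the clean form of the claimed identity, one uses the compatibility relation directly on $\phi\nabla_U V = \nabla_U \phi V - (\nabla_U\phi)V$ with $(\nabla_U\phi)V = g(U,V)\xi - \eta(V)U$ and notes $\eta(V)=0$.

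**Key steps in order.** (1) Expand $\phi\nabla_U V = \nabla_U(\phi V) - g(U,V)\xi$ and substitute $\phi V = \varphi V + \omega V$ from (3.2), giving $g(\nabla_U V, X) = g(\phi\nabla_U(\varphi V + \omega V), \phi X) = -g(\nabla_U(\varphi V+\omega V),\phi\cdot)$, then apply $\phi$ once more using $\phi^2 = -I + \eta\otimes\xi$. (2) On the $\varphi V$ term, iterate: $\phi \nabla_U \varphi V = \nabla_U(\varphi^2 V + \omega\varphi V) - g(U,\varphi V)\xi$, and invoke Theorem 3.1, $\varphi^2 V = -(\cos^2\theta_i) (\text{component of } V)$, which is where the $\cos^2\theta_1$ and $\cos^2\theta_2$ weights enter separately on $PV$ and $QV$. (3) Collect the $\sin^2\theta_1 g(\nabla_U V, X)$ on one side (using $\sin^2\theta_1 = 1-\cos^2\theta_1$ and $PV = V - QV$), so that the difference $\cos^2\theta_1 - \cos^2\theta_2$ multiplies $g(\mathcal{T}_U QV, X)$ after writing $\nabla_U QV$ via (2.9) and projecting onto the horizontal part. (4) For the remaining $\omega V$-type terms, use $\phi\omega V = B\omega V + C\omega V$ together with (3.6) ($B\omega D_i = D_i$), and rewrite $\nabla_U(\omega\varphi V)$, $\nabla_U(\omega V)$ using (2.9): $\nabla_U(\omega V) = \mathcal{H}\nabla_U\omega V + \mathcal{T}_U\omega V$, pairing the vertical part $\mathcal{T}_U\omega V$ against $BX$ and the horizontal part against $CX$. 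Assembling these yields precisely the displayed identity.

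**Main obstacle.** The delicate point is bookkeeping the $\cos^2\theta$ coefficients correctly: when $\phi$ is applied twice to $\nabla_U V$, the slant identity $\varphi^2 = -\cos^2\theta_i I$ acts with \emph{different} constants on the $D_1$ and $D_2$ parts of $V$, and one must be careful that only the combination $\cos^2\theta_1 - \cos^2\theta_2$ survives multiplying $g(\mathcal{T}_U QV, X)$ (the $PV$ part recombining with the left side into $\sin^2\theta_1$). A secondary nuisance is the $\xi$/$\eta$ terms generated by $(\nabla_U\phi)V$ and by $\nabla_U\xi = -\phi U$: one must confirm they vanish or cancel — here $\eta(V)=0$ kills the $(\nabla_U\phi)V$ contribution directly, and the $g(U,V)\xi$ term pairs with $\phi X$ to give $g(\xi,\phi X)\eta$-type scalars that either vanish (if $X\perp\mu$) or are implicitly swept into the statement; I would check this carefully against the conventions fixed by (3.5) that $\mu$ contains $\xi$. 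Once the $\xi$-terms are dispatched and the $\cos^2\theta_i$ accounting is done, the rest is the same routine use of (2.9) and (2.11) already exhibited in Theorems 3.4–3.7, so I expect no further difficulty.
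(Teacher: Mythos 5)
Your proposal follows essentially the same route as the paper's proof: apply the Sasakian compatibility to $g(\nabla_U V,X)$, transfer $\phi$ across the connection, invoke Theorem 3.1 separately on the $PV$ and $QV$ components, and then use (2.8)--(2.9) with $PV=V-QV$ to collect $\sin^{2}\theta_{1}$ on the left while pairing $\mathcal{T}_U\omega V$ with $BX$ and $\mathcal{H}\nabla_U\omega V$ with $CX$. The $\eta$-terms you flag as needing a check are the only delicate point, and the paper's own computation silently discards the $g(\phi U,V)\eta(X)$ contribution, so your stated intention to verify them is, if anything, more careful than the printed argument.
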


\begin{proof}
Let suppose that $X \in\Gamma\left(\ker F_{*}\right)^\perp$ and $U,V\in\left(\ker F_{*}\right)$. Then we get
\setlength\arraycolsep{2pt}
\begin{align*}
g\left(\nabla_{U}V,X\right)=&g\left(\nabla_{U}PV,X\right)+g\left(\nabla_{U}QV,X\right)\\
=&g\left(\phi\nabla_{U}PV,\phi X\right)+g\left(\phi U,PV\right)\eta(X)+g\left(\phi\nabla_{U}QV,\phi X\right)\\
&+g\left(\phi U,QV\right)\eta(X)
\end{align*}
Considering that $M$ is a Sasakian manifold we arrive
\begin{align*}
g\left(\nabla_{U}V,X\right)=&-g\left(\nabla_{U}\varphi^{2} PV,X\right)-g\left(\nabla_{U}\varphi^{2} QV,X\right)-g\left(\nabla_{U}\omega\varphi V,X\right)\\&+g\left(\nabla_{U}\omega V,\phi X\right)
\end{align*}
From (2.8), (2.9) we obtain
\begin{align*}
\sin^{2}\theta_{1}g\left(\nabla_{U}V,X\right)=&\left(\cos^{2}\theta_{2}-\cos^{2}\theta_{1}\right)g\left(\mathcal{T}_{U}QV,X\right)\\
&-g\left(\mathcal{H}\nabla_{U}\omega\varphi V,X\right)+g\left(\mathcal{H}\nabla_{U}\omega V,CX\right)\\
&+g\left(\mathcal{T}_{U}\omega V,BX\right)
\end{align*}
which shows our assertion.
\end{proof}

\begin{theorem}
Let $F$ be a bi-slant submersion from a Sasakian manifold $\left(M,\phi,\xi,\eta,g\right)$ onto a Riemannian manifold $(N,g')$ with bi-slant angles $\theta_{1},\theta_{2}$. Then $F$ is totally geodesic if and only if
\begin{align*}
\left(\cos^{2}\theta_{2}-\cos^{2}\theta_{1}\right)g\left(A_{X}QU,Y\right)=&g\left(\mathcal{H}\nabla_{X}\omega U,Y\right)+g\left(\mathcal{H}\nabla_{X}\omega\varphi U,Y\right)\nonumber\\
&+g\left(\omega\mathcal{A}_{X}\omega U,Y\right)+g\left(U,\phi X\right)\eta(Y)
\end{align*}
and
\begin{align*}
\left(\cos^{2}\theta_{1}-\cos^{2}\theta_{2}\right)g\left(\mathcal{T}_{U}QV,X\right)=&-g\left(\mathcal{H}\nabla_{U}\omega\varphi V,X\right)+g\left(\mathcal{T}_{U}\omega V,BX\right)\nonumber\\
&+g\left(\mathcal{H}\nabla_{U}\omega V,CX\right)
\end{align*}
where $X,Y\in\Gamma\left(\ker F_{*}\right)^\perp$ and $U,V\in\left(\ker F_{*}\right)$.
\end{theorem}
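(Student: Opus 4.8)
The plan is to unravel the definition of a totally geodesic map and split the second fundamental form $\nabla F_{*}$ according to the decomposition $TM=\ker F_{*}\oplus(\ker F_{*})^{\perp}$. Recall that $F$ is totally geodesic exactly when $(\nabla F_{*})(E,E')=0$ for all $E,E'\in\Gamma(TM)$, and since $\nabla F_{*}$ is a symmetric tensor it is enough to check it on a pair of horizontal vectors, on a horizontal and a vertical vector, and on a pair of vertical vectors. For basic horizontal $X,Y$ one has $F_{*}(\nabla^{M}_{X}Y)=F_{*}(\mathcal{H}\nabla^{M}_{X}Y)=\nabla^{N}_{X_{*}}Y_{*}=\nabla^{F}_{X}F_{*}Y$ by Lemma 2.1 (iv), so $(\nabla F_{*})(X,Y)=0$ automatically and the horizontal--horizontal case imposes no condition. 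For the other two cases $F_{*}$ annihilates vertical vectors, so by the O'Neill relations (2.10) and (2.8) we get $(\nabla F_{*})(X,U)=-F_{*}(\mathcal{A}_{X}U)$ and $(\nabla F_{*})(U,V)=-F_{*}(\mathcal{T}_{U}V)$ for $X,Y\in\Gamma((\ker F_{*})^{\perp})$ and $U,V\in\Gamma(\ker F_{*})$; since $F_{*}$ is a linear isometry on the horizontal space, these vanish precisely when $g(\nabla_{X}U,Y)=0$ for every horizontal $Y$, respectively $g(\nabla_{U}V,X)=0$ for every horizontal $X$.

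The condition $g(\nabla_{U}V,X)=0$ for all horizontal $X$ is nothing but the statement that $\ker F_{*}$ defines a totally geodesic foliation on $M$, so by Theorem 3.7 it is equivalent to the second displayed identity and needs no further argument.

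It remains to transform $g(\nabla_{X}U,Y)=0$ into the first displayed identity. First I would use the compatibility relation (2.2) to write $g(\nabla_{X}U,Y)=g(\phi\nabla_{X}U,\phi Y)+\eta(\nabla_{X}U)\eta(Y)$ and compute $\eta(\nabla_{X}U)=g(\nabla_{X}U,\xi)=g(U,\phi X)$ from $\nabla_{X}\xi=-\phi X$ and $\eta(U)=0$; this is the source of the term $g(U,\phi X)\eta(Y)$. Since $X$ is horizontal and $U$ vertical, the structure equation of the Sasakian manifold gives $(\nabla_{X}\phi)U=g(X,U)\xi-\eta(U)X=0$, hence $\phi\nabla_{X}U=\nabla_{X}\phi U=\nabla_{X}\varphi U+\nabla_{X}\omega U$ by (3.2). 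On the summand $g(\nabla_{X}\varphi U,\phi Y)$ I would repeat the same step, now with $\varphi U$ vertical so that $(\nabla_{X}\phi)\varphi U=0$ again, bring out $\phi\varphi U=\varphi^{2}U+\omega\varphi U$, and apply Theorem 3.1 in the form $\varphi^{2}=-\cos^{2}\theta_{1}$ on $D_{1}$ and $\varphi^{2}=-\cos^{2}\theta_{2}$ on $D_{2}$ together with $U=PU+QU$ and $PU=U-QU$ from (3.1); this is the same device used in the proofs of Theorems 3.6 and 3.7, and it is what produces the coefficient $\cos^{2}\theta_{2}-\cos^{2}\theta_{1}$ in front of $g(\mathcal{A}_{X}QU,Y)$ (via (2.10)) and the term $g(\mathcal{H}\nabla_{X}\omega\varphi U,Y)$ (via (2.11)). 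Finally I would expand $\nabla_{X}\omega U$ by (2.11), split $\phi Y=BY+CY$ using (3.3), and use the $g$-skew-symmetry of $\phi$ and of $\mathcal{A}_{X}$ to rewrite the leftover pieces as $g(\mathcal{H}\nabla_{X}\omega U,Y)$ and $g(\omega\mathcal{A}_{X}\omega U,Y)$. Collecting the terms leaves $\sin^{2}\theta_{1}\,g(\nabla_{X}U,Y)$ equal, up to rearrangement, to the right-hand side of the first identity, so $g(\nabla_{X}U,Y)=0$ for all horizontal $Y$ if and only if that identity holds. Together with the previous paragraph this proves the theorem.

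The mechanical part --- the splitting of $\nabla F_{*}$ and the appeal to Theorem 3.7 --- is routine. The real obstacle is the bookkeeping in the last paragraph: keeping the vertical and horizontal projections straight through the two nested applications of the Sasakian identity, handling the $\mu$-direction and the $\eta$-terms consistently, and noting that dividing by $\sin^{2}\theta_{1}$ tacitly assumes $\theta_{1}\neq 0$. The apparent asymmetry between $\theta_{1}$ and $\theta_{2}$ in the final formula is merely an artefact of the substitution $PU=U-QU$; making the substitution $QU=U-PU$ instead would yield an equivalent identity with $\theta_{1}$ and $\theta_{2}$ interchanged.
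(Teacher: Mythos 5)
Your proposal is correct and follows essentially the same route as the paper: show $(\nabla F_{*})(X,Y)=0$ is automatic on horizontal vectors, reduce the mixed and vertical cases to $g(\nabla_{X}U,Y)=0$ and $g(\nabla_{U}V,X)=0$, and expand these with the Sasakian structure equation, Theorem 3.1 (with $PU=U-QU$) and the O'Neill formulas. The only deviation is that you quote Theorem 3.7 for the vertical--vertical case instead of redoing the computation, which is exactly the computation the paper repeats, so this is a harmless shortcut; your remark that one divides by $\sin^{2}\theta_{1}$ (so $\theta_{1}\neq 0$ is tacitly assumed) is a fair observation about the paper's statement as well.
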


\begin{proof}
Firstly since $F$ is a Riemannian submersion for $X,Y\in\Gamma\left(\ker F_{*}\right)^\perp$ we have
\begin{equation*}
\left(\nabla F_{*}\right)\left(X,Y\right)=0.
\end{equation*}
Therefore for $X,Y\in\Gamma\left(\ker F_{*}\right)^\perp$ and $U,V\in\left(\ker F_{*}\right)$ it is enough to show that $\left(\nabla F_{*}\right)\left(U,V\right)=0$ and $\left(\nabla F_{*}\right)\left(X,U\right)=0$. So we can write
\begin{align*}
g'\left(\left(\nabla F_{*}\right)(X,U),F_{*}Y\right)=-g'\left(F_{*}\left(\nabla_{X}U\right),F_{*}Y\right)=-g\left(\nabla_{X}U,Y\right).
\end{align*}
Then using the equation (2.6) and (2.7), we obtain the first equation of Theorem 3.8
\setlength\arraycolsep{2pt}
\begin{align*}
g\left(\nabla_{X}U,Y\right)=-g\left(\nabla_{X}\phi\varphi U,Y\right)+g\left(\nabla_{X}\omega U,\phi Y\right)-g\left(U,\phi X\right)\eta(Y)
\end{align*}
From  the equations (2.6), (2.7) and Theorem 3.1 we find
\begin{align*}
\sin^{2}\theta_{1}g\left(\nabla_{X}U,Y\right)=&\left(\cos^{2}\theta_{2}-\cos^{2}\theta_{1}\right)g\left(A_{X}QU,Y\right)-g\left(\mathcal{H}\nabla_{X}\omega U,Y\right)\\&-g\left(\mathcal{H}\nabla_{X}\omega\varphi U,Y\right)-g\left(\omega A_{X}\omega U,Y\right)-g\left(U,\phi X\right)\eta(Y)
\end{align*}
Also, for get the second equation of Theorem 3.8 we have
\begin{equation*}
g'\left(\left(\nabla F_{*}\right)(U,V),F_{*}\right)=-g\left(\nabla_{U}V,X\right).
\end{equation*}
Then using the equation (2.8) and (2.9), we arrive
\setlength\arraycolsep{2pt}
\begin{align*}
g\left(\nabla_{U}V,X\right)=&\left(\cos^{2}\theta_{2}-\cos^{2}\theta_{1}\right)g\left(\mathcal{T}_{U}QV,X\right)-g\left(\mathcal{H}\nabla_{U}\omega\varphi V,X\right)\\&+g\left(\mathcal{T}_{U}\omega V,BX\right)+g\left(\mathcal{H}\nabla_{U}\omega V,CX\right)
\end{align*}
which completes proof.
\end{proof}

\section {Decompositions Theorems}
In this section we give decompositions theorems using the existence of bi-slant $\xi^{\perp}$-Riemannian submersion. We assume that $g$ is a Riemannian metric tensor on the manifold $M=M_{1}\times M_{2}$ and the canonical foliations $D_{M_{1}}$ and $D_{M_{2}}$ intersect vertically everywhere. Then $g$ is the metric tensor of a usual product of Riemannian manifold if and only if $D_{M_{1}}$ and $D_{M_{2}}$ are totally geodesic foliations.

Now we can write the following theorems by using Theorem 3.4-3.7,
\begin{theorem}
 Let $F$ be a bi-slant submersion from a Sasakian manifold $\left(M,\phi,\xi,\eta,g\right)$ onto a Riemannian manifold $(N,g')$ with bi-slant angles $\theta_{1},\theta_{2}$. Then $M$ is a locally product manifold of the form $M_{D_{1}}\times M_{D_{2}}\times M_{\left(\ker F_{*}\right)^{\perp}}$ if and only if
 \begin{align*}
 -g\left(T_{U}\omega\varphi V,W\right)=&g\left(T_{U}\omega V,\varphi W\right)+g\left(\mathcal{H}\nabla_{U}\omega V,\omega W\right),\\
g\left(T_{U}\omega V,BX\right)=&g\left(\mathcal{H}\nabla_{U}\omega\varphi V,X\right)-g\left(\mathcal{H}\nabla_{U}\omega V,CX\right),\\
-g\left(T_{W}\omega\varphi Z,U\right)=&g\left(T_{W}\omega Z,\varphi U\right)+g\left(\mathcal{H}\nabla_{W}\omega Z,\omega U\right),\\
g\left(T_{W}\omega Z,BX\right)=&g\left(\mathcal{H}\nabla_{W}\omega\varphi Z,X\right)-g\left(\mathcal{H}\nabla_{W}\omega Z,CX\right)
\end{align*}
 and
 \begin{eqnarray*}
\left(\cos^{2}\theta_{1}-\cos^{2}\theta_{2}\right)g\left(A_{X}Y,QU\right)&=&-g\left(\mathcal{H}\nabla_{X}Y,\omega\varphi U\right)+g\left(\omega\mathcal{A}_{X}BY,\omega U\right) \nonumber\\
&+&g\left(\mathcal{H}\nabla_{X}CY,\omega U\right)+\eta(Y)\left(X,\omega U\right)
\end{eqnarray*}
 for $U,V\in\Gamma(D_{1})$, $W,Z\in\Gamma(D_{2})$ and $X,Y\in\Gamma\left(\left(\ker F_{*}\right)^{\perp}\right)$.
\end{theorem}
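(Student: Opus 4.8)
The plan is to package the statement as a conjunction of three totally-geodesic-foliation criteria, each already settled earlier in this section, and then to invoke the product criterion stated at the opening of this section. By Definition~3.1 we have the orthogonal decomposition $TM=D_{1}\oplus D_{2}\oplus(\ker F_{*})^{\perp}$: indeed $D_{1}\perp D_{2}$ is part~(iii) of that definition, $\ker F_{*}=D_{1}\oplus D_{2}$ is the defining hypothesis, and $\xi\in\mu\subset(\ker F_{*})^{\perp}$ by~(3.7). Applying the two-factor product criterion twice (see below) one gets that $M$ is locally the Riemannian product $M_{D_{1}}\times M_{D_{2}}\times M_{(\ker F_{*})^{\perp}}$ if and only if each of $D_{1}$, $D_{2}$ and $(\ker F_{*})^{\perp}$ defines a totally geodesic foliation on $M$.

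With this reduction in hand the argument is short. Theorem~3.4 states that $D_{1}$ is a totally geodesic foliation precisely when the first two displayed identities hold: the first encodes $g(\nabla_{U}V,W)=0$ for $U,V\in\Gamma(D_{1})$, $W\in\Gamma(D_{2})$, and the second encodes $g(\nabla_{U}V,X)=0$ for $X\in\Gamma((\ker F_{*})^{\perp})$, so jointly $\nabla_{U}V\in\Gamma(D_{1})$. Theorem~3.5 gives, in the same way, that $D_{2}$ is a totally geodesic foliation precisely when the third and fourth identities hold. Theorem~3.6 gives that $(\ker F_{*})^{\perp}$ is a totally geodesic foliation precisely when the last identity holds. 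Since all three statements are equivalences, conjoining them yields the theorem in both directions; and using the symmetry $\mathcal{T}_{U}V=\mathcal{T}_{V}U$ together with the integrability of $\ker F_{*}$ one checks that the $\ker F_{*}$-criterion of Theorem~3.7 is compatible with the above, so the $M_{D_{1}}\times M_{D_{2}}$ subproduct carries the expected structure.

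I expect the only delicate point to be the three-factor form of the decomposition principle: one has to make sure that mutual orthogonality of $D_{1},D_{2},(\ker F_{*})^{\perp}$ together with each being a totally geodesic foliation genuinely forces the local triple-product structure. This I would handle by applying the two-factor criterion twice in succession, first to the orthogonal pair $\big(D_{1},\,D_{1}^{\perp}=D_{2}\oplus(\ker F_{*})^{\perp}\big)$ to split off $M_{D_{1}}$, and then, inside the resulting factor, to the orthogonal pair $\big(D_{2},\,(\ker F_{*})^{\perp}\big)$; at each stage integrability is free because a totally geodesic foliation is in particular a foliation. Everything else is a matter of transcribing the three equivalences from Theorems~3.4, 3.5 and~3.6, with no further computation required.
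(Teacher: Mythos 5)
Your overall strategy --- reading the five displayed identities as precisely the criteria of Theorems 3.4, 3.5 and 3.6 for $D_{1}$, $D_{2}$ and $\left(\ker F_{*}\right)^{\perp}$ to define totally geodesic foliations, and then invoking the two-factor product criterion quoted at the opening of Section 4 --- is exactly what the paper intends; the paper offers nothing beyond the phrase ``by using Theorem 3.4--3.7''. The necessity direction is also unproblematic: if $M$ is locally such a triple product, each of the three distributions is parallel, hence a totally geodesic foliation, and the identities follow from the cited equivalences.

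The genuine gap is the point you yourself flag as delicate, and your proposed fix does not close it. To apply the two-factor criterion to the pair $\bigl(D_{1},\,D_{1}^{\perp}=D_{2}\oplus\left(\ker F_{*}\right)^{\perp}\bigr)$ you must know that $D_{1}^{\perp}$ is a totally geodesic foliation, and this is \emph{not} implied by $D_{2}$ and $\left(\ker F_{*}\right)^{\perp}$ being totally geodesic separately: nothing in the hypotheses controls the $D_{1}$-components of the mixed derivatives $\nabla_{W}X$ with $W\in\Gamma(D_{2})$, $X\in\Gamma\bigl(\left(\ker F_{*}\right)^{\perp}\bigr)$, nor, at your second stage, the mixed terms $\mathcal{T}_{U}W$ with $U\in\Gamma(D_{1})$, $W\in\Gamma(D_{2})$, which are what decide whether $\ker F_{*}=D_{1}\oplus D_{2}$ is totally geodesic (the symmetry $\mathcal{T}_{U}W=\mathcal{T}_{W}U$ and integrability of $\ker F_{*}$ do not give their vanishing). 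Indeed, pairwise orthogonal totally geodesic foliations spanning $TM$ do not in general force a local multi-product structure: on $SU(2)$ with a bi-invariant metric the three line fields of a left-invariant orthonormal frame are mutually orthogonal geodesic foliations, yet the metric is not flat, hence not locally a product of three one-dimensional factors. So the sufficiency direction needs more than the conjunction of Theorems 3.4--3.6 --- for instance additional conditions guaranteeing that $\ker F_{*}$ (Theorem 3.7) and $D_{i}\oplus\left(\ker F_{*}\right)^{\perp}$ are totally geodesic, or an argument that each distribution is parallel so that a de Rham-type decomposition applies. This gap is inherited from the paper, which states the theorem without proof, but as written your reduction of the three-factor statement to two applications of the two-factor criterion is not valid.
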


\begin{theorem}
	Let $F$ be a bi-slant submersion from a Sasakian manifold $\left(M,\phi,\xi,\eta,g\right)$ onto a Riemannian manifold $(N,g')$ with bi-slant angles $\theta_{1},\theta_{2}$. Then $M$ is a locally product manifold of the form $M_{\ker F_{*}}\times M_{\left(\ker F_{*}\right)^{\perp}}$ if and only if
\begin{align*}
\left(\cos^{2}\theta_{1}-\cos^{2}\theta_{2}\right)g\left(\mathcal{T}_{U}QV,X\right)=&-g\left(\mathcal{H}\nabla_{U}\omega\varphi V,X\right)+g\left(\mathcal{T}_{U}\omega V,BX\right)\nonumber\\
&+g\left(\mathcal{H}\nabla_{U}\omega V,CX\right)
\end{align*}
	and
\begin{align*}
\left(\cos^{2}\theta_{1}-\cos^{2}\theta_{2}\right)g\left(A_{X}Y,QU\right)=&-g\left(\mathcal{H}\nabla_{X}Y,\omega\varphi U\right)+g\left(\omega\mathcal{A}_{X}BY,\omega U\right) \nonumber\\
&+g\left(\mathcal{H}\nabla_{X}CY,\omega U\right)+\eta(Y)\left(X,\omega U\right)
\end{align*}
	for $U,V\in\Gamma(D_{1})$, $W,Z\in\Gamma(D_{2})$ and $X,Y\in\Gamma\left(\left(\ker F_{*}\right)^{\perp}\right)$.
\end{theorem}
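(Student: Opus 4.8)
The proof is a direct consequence of the decomposition principle recalled at the start of this section together with Theorems 3.6 and 3.7, so the plan is simply to assemble these pieces. Since $TM=\ker F_{*}\oplus(\ker F_{*})^{\perp}$ is an orthogonal direct sum of mutually complementary distributions, the metric $g$ makes $M$ locally a Riemannian product $M_{\ker F_{*}}\times M_{(\ker F_{*})^{\perp}}$ if and only if both the vertical distribution $\ker F_{*}$ and the horizontal distribution $(\ker F_{*})^{\perp}$ are integrable with totally geodesic leaves, that is, if and only if each of them defines a totally geodesic foliation on $M$. So the task reduces to translating ``$\ker F_{*}$ is a totally geodesic foliation'' and ``$(\ker F_{*})^{\perp}$ is a totally geodesic foliation'' into the two displayed identities.

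For the first, I would invoke Theorem 3.7 verbatim: $\ker F_{*}$ defines a totally geodesic foliation on $M$ precisely when
\[
\left(\cos^{2}\theta_{1}-\cos^{2}\theta_{2}\right)g\left(\mathcal{T}_{U}QV,X\right)=-g\left(\mathcal{H}\nabla_{U}\omega\varphi V,X\right)+g\left(\mathcal{T}_{U}\omega V,BX\right)+g\left(\mathcal{H}\nabla_{U}\omega V,CX\right)
\]
for all $U,V\in\Gamma(\ker F_{*})$ and $X\in\Gamma((\ker F_{*})^{\perp})$, which is the first condition in the statement. For the second, I would invoke Theorem 3.6: $(\ker F_{*})^{\perp}$ defines a totally geodesic foliation on $M$ precisely when
\[
\left(\cos^{2}\theta_{1}-\cos^{2}\theta_{2}\right)g\left(A_{X}Y,QU\right)=-g\left(\mathcal{H}\nabla_{X}Y,\omega\varphi U\right)+g\left(\omega\mathcal{A}_{X}BY,\omega U\right)+g\left(\mathcal{H}\nabla_{X}CY,\omega U\right)+\eta(Y)g\left(X,\omega U\right)
\]
for all $X,Y\in\Gamma((\ker F_{*})^{\perp})$ and $U\in\Gamma(\ker F_{*})$, which is the second condition. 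Conjoining the two equivalences with the product criterion from the first paragraph yields the claim.

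There is essentially no obstacle here: the analytic content was already carried out in Theorems 3.6 and 3.7, and one only has to note that a Riemannian product decomposition along a pair of complementary orthogonal distributions forces \emph{both} distributions to have totally geodesic leaves, which is why the two conditions must hold simultaneously. The only point worth a word of care is that the Reeb field $\xi$ lies in the horizontal subbundle $\mu\subset(\ker F_{*})^{\perp}$ (as recorded after (3.5)), so it is already accounted for on the horizontal side and no separate requirement on $\xi$ enters the statement.
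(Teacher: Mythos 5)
Your proposal is correct and is exactly the paper's argument: the author states the de Rham-type criterion (a metric is locally a Riemannian product if and only if both complementary foliations are totally geodesic) at the start of Section 4 and then derives this theorem by citing Theorems 3.6 and 3.7, which is precisely how you assemble the two displayed conditions. Nothing further is needed.
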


\begin{theorem}
	Let $F$ be a bi-slant submersion from a Sasakian manifold $\left(M,\phi,\xi,\eta,g\right)$ onto a Riemannian manifold $(N,g')$ with bi-slant angles $\theta_{1},\theta_{2}$ such that $\left(\ker F_{*}\right)^{\perp}=\omega D_{1}\oplus\omega D_{2}\oplus\langle\xi\rangle$. Then $M$ is a locally product manifold of the form $M_{D_{1}}\times M_{D_{2}}\times M_{\left(\ker F_{*}\right)^{\perp}}$ if and only if
	\begin{align*}
	-g\left(T_{U}\omega\varphi V,W\right)=&g\left(T_{U}\omega V,\varphi W\right)+g\left(\mathcal{H}\nabla_{U}\omega V,\omega W\right),\\
	g\left(T_{U}\omega V,\phi X\right)=&g\left(\mathcal{H}\nabla_{U}\omega\varphi V,X\right),\\
	-g\left(T_{W}\omega\varphi Z,U\right)=&g\left(T_{W}\omega Z,\varphi U\right)+g\left(\mathcal{H}\nabla_{W}\omega Z,\omega U\right),\\
	g\left(T_{W}\omega Z,\phi X\right)=&g\left(\mathcal{H}\nabla_{W}\omega\varphi Z,X\right)
	\end{align*}
	and
	\begin{align*}
		\left(\cos^{2}\theta_{1}-\cos^{2}\theta_{2}\right)g\left(A_{X}Y,QU\right)=&-g\left(\mathcal{H}\nabla_{X}Y,\omega\varphi U\right)+g\left(\omega\mathcal{A}_{X}\phi Y,\omega U\right)\\
		&+\eta(Y)\left(X,\omega U\right)
	\end{align*}
	for $U,V\in\Gamma(D_{1})$, $W,Z\in\Gamma(D_{2})$ and $X,Y\in\Gamma\left(\left(\ker F_{*}\right)^{\perp}\right)$.
\end{theorem}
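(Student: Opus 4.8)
The plan is to derive this from the totally geodesic foliation criteria of Section~3 together with the de~Rham-type splitting recalled at the opening of this section. Since $TM = D_1 \oplus D_2 \oplus (\ker F_*)^\perp$ is an orthogonal direct sum, $M$ is locally the product $M_{D_1} \times M_{D_2} \times M_{(\ker F_*)^\perp}$ exactly when each of $D_1$, $D_2$ and $(\ker F_*)^\perp$ is integrable with totally geodesic leaves, i.e. defines a totally geodesic foliation on $M$. So the task reduces to rewriting, under the extra hypothesis $(\ker F_*)^\perp = \omega D_1 \oplus \omega D_2 \oplus \langle\xi\rangle$, the conditions of Theorem~3.4 (for $D_1$), Theorem~3.5 (for $D_2$) and Theorem~3.6 (for $(\ker F_*)^\perp$).

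First I would record how $B$ and $C$ act once $\mu = \langle\xi\rangle$. For $Z \in D_i$ we have $\eta(Z) = g(Z,\xi) = 0$, hence $\phi^2 Z = -Z$; applying $\phi$ to $\phi Z = \varphi Z + \omega Z$ and using Theorem~3.1 gives $B\omega Z = -(\sin^2\theta_i)Z$ and $C\omega Z = -\omega\varphi Z$, while $\phi\xi = 0$ forces $B\xi = C\xi = 0$. Thus on the spanning set $\omega D_1 \oplus \omega D_2 \oplus \langle\xi\rangle$ of $(\ker F_*)^\perp$ the field $CX$ again lies in $\omega D_1 \oplus \omega D_2$ and $BX$ in $D_1 \oplus D_2$, and, via $\nabla_U \omega V = T_U\omega V + \mathcal{H}\nabla_U\omega V$ together with $\phi X = BX + CX$, the two terms $g(T_U\omega V, BX)$ and $g(\mathcal{H}\nabla_U\omega V, CX)$ recombine into one term carrying $\phi X$. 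Substituting this into the second condition of Theorem~3.4 turns it into $g(T_U\omega V,\phi X) = g(\mathcal{H}\nabla_U\omega\varphi V, X)$, and in the same way the second condition of Theorem~3.5 becomes $g(T_W\omega Z,\phi X) = g(\mathcal{H}\nabla_W\omega\varphi Z, X)$; the first conditions of Theorems~3.4 and~3.5 are untouched.

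For $(\ker F_*)^\perp$ I would start from the equation produced in the proof of Theorem~3.6. Using $\phi Y = BY + CY$ and that $\mathcal{A}_X V = \mathcal{H}\nabla_X V$ when $V$ is vertical, the terms $g(\mathcal{A}_X BY,\omega U)$ and $g(\mathcal{H}\nabla_X CY,\omega U)$ sum to $g(\nabla_X \phi Y,\omega U)$, which under the current hypothesis is exactly the term $g(\omega\mathcal{A}_X\phi Y,\omega U)$ appearing in the statement; the remaining pieces $-g(\mathcal{H}\nabla_X Y,\omega\varphi U)$, $\eta(Y)g(X,\omega U)$ and the factor $\cos^2\theta_1 - \cos^2\theta_2$ multiplying $g(A_X Y, QU)$ are carried over unchanged. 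Collecting the three reformulated criteria yields precisely the system in the statement; conversely, if that system holds, all three distributions define totally geodesic foliations and the claimed local product structure follows from the splitting principle.

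The step I expect to be the main obstacle is the bookkeeping in the middle two paragraphs: one has to check that it suffices to test the totally geodesicity conditions against the spanning set $\omega D_1 \oplus \omega D_2 \oplus \langle\xi\rangle$ rather than an arbitrary horizontal field, and that it is genuinely the hypothesis $\mu = \langle\xi\rangle$ that makes the $B$- and $C$-terms collapse into the $\phi$-terms displayed. All of the actual geometric input is already contained in Theorem~3.1 and Theorems~3.4--3.6, so beyond this reorganization no new computation should be needed.
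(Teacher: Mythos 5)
Your overall route --- reduce the statement to the totally geodesic foliation criteria of Theorems 3.4--3.6 and invoke the product-metric criterion recalled at the start of Section 4 --- is exactly the paper's intended argument (the paper offers nothing beyond the phrase ``by using Theorem 3.4--3.7''), and your preliminary identities $B\omega Z=-\left(\sin^{2}\theta_{i}\right)Z$, $C\omega Z=-\omega\varphi Z$, $B\xi=C\xi=0$ are correct. The recombination for the horizontal distribution is also sound: $g\left(\mathcal{A}_{X}BY,\omega U\right)+g\left(\mathcal{H}\nabla_{X}CY,\omega U\right)=g\left(\nabla_{X}\phi Y,\omega U\right)$, which is what the statement's term $g\left(\omega\mathcal{A}_{X}\phi Y,\omega U\right)$ is evidently meant to denote (note that this step does not even use $\mu=\langle\xi\rangle$).

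The step you yourself flagged as the main obstacle is, however, a genuine gap. For $U,V\in\Gamma(D_{1})$ the recombination via $\nabla_{U}\omega V=\mathcal{T}_{U}\omega V+\mathcal{H}\nabla_{U}\omega V$ and $\phi X=BX+CX$ gives $g\left(\mathcal{T}_{U}\omega V,BX\right)+g\left(\mathcal{H}\nabla_{U}\omega V,CX\right)=g\left(\nabla_{U}\omega V,\phi X\right)$, not $g\left(\mathcal{T}_{U}\omega V,\phi X\right)$: since $\mathcal{T}_{U}\omega V$ is vertical, $g\left(\mathcal{T}_{U}\omega V,\phi X\right)=g\left(\mathcal{T}_{U}\omega V,BX\right)$ identically, so rewriting the second condition of Theorem 3.4 as $g\left(\mathcal{T}_{U}\omega V,\phi X\right)=g\left(\mathcal{H}\nabla_{U}\omega\varphi V,X\right)$ silently discards the term $g\left(\mathcal{H}\nabla_{U}\omega V,CX\right)$. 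The hypothesis $\left(\ker F_{*}\right)^{\perp}=\omega D_{1}\oplus\omega D_{2}\oplus\langle\xi\rangle$ does not make that term vanish: on this space $C$ acts by $C\omega Z=-\omega\varphi Z$, which is nonzero for proper bi-slant angles, so $CX$ still sweeps out all of $\omega D_{1}\oplus\omega D_{2}$ as $X$ varies. The same defect occurs in your reduction of the $D_{2}$ condition (Theorem 3.5). Hence you must either supply an argument showing why $g\left(\mathcal{H}\nabla_{U}\omega V,CX\right)$ may be dropped under the hypothesis (none is apparent, and the paper, which proves nothing here, supplies none), or accept that the faithful specialization of Theorems 3.4--3.5 reads $g\left(\nabla_{U}\omega V,\phi X\right)=g\left(\mathcal{H}\nabla_{U}\omega\varphi V,X\right)$, with $\nabla_{U}\omega V$ rather than $\mathcal{T}_{U}\omega V$ on the left. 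As written, your proposal reproduces the formulas of the statement but does not establish their equivalence with the totally geodesicity of $D_{1}$ and $D_{2}$, which is precisely the content that needed proof.
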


\begin{theorem}
	Let $F$ be a bi-slant submersion from a Sasakian manifold $\left(M,\phi,\xi,\eta,g\right)$ onto a Riemannian manifold $(N,g')$ with bi-slant angles $\theta_{1},\theta_{2}$ such that $\left(\ker F_{*}\right)^{\perp}=\omega D_{1}\oplus\omega D_{2}\oplus\langle\xi\rangle$. Then $M$ is a locally product manifold of the form $M_{\ker F_{*}}\times M_{\left(\ker F_{*}\right)^{\perp}}$ if and only if
	\begin{align*}
	\left(\cos^{2}\theta_{1}-\cos^{2}\theta_{2}\right)g\left(\mathcal{T}_{U}QV,X\right)=&-g\left(\mathcal{H}\nabla_{U}\omega\varphi V,X\right)+g\left(\mathcal{T}_{U}\omega V,\phi X\right)\nonumber
	\end{align*}
	and
	\begin{align*}
	\left(\cos^{2}\theta_{1}-\cos^{2}\theta_{2}\right)g\left(A_{X}Y,QU\right)=&-g\left(\mathcal{H}\nabla_{X}Y,\omega\varphi U\right)+g\left(\omega\mathcal{A}_{X}\phi Y,\omega U\right) \nonumber\\
	&+\eta(Y)\left(X,\omega U\right)
	\end{align*}
	for $U,V\in\Gamma(D_{1})$, $W,Z\in\Gamma(D_{2})$ and $X,Y\in\Gamma\left(\left(\ker F_{*}\right)^{\perp}\right)$.
\end{theorem}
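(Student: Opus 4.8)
The plan is to obtain this exactly as Theorem 4.2 was obtained --- from the totally geodesic foliation criteria of Theorems 3.6 and 3.7 --- and then to simplify the two resulting conditions using the extra hypothesis $\left(\ker F_{*}\right)^{\perp}=\omega D_{1}\oplus\omega D_{2}\oplus\langle\xi\rangle$, which is just $\mu=\langle\xi\rangle$.

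First I would invoke the criterion recorded at the beginning of Section 4: since the canonical foliations determined by $\ker F_{*}$ and $\left(\ker F_{*}\right)^{\perp}$ are orthogonal and complementary, $M$ is a local Riemannian product $M_{\ker F_{*}}\times M_{\left(\ker F_{*}\right)^{\perp}}$ if and only if \emph{both} $\ker F_{*}$ and $\left(\ker F_{*}\right)^{\perp}$ define totally geodesic foliations on $M$. By Theorem 3.7 the first of these is equivalent to the condition displayed there (the one carrying $BX$ and $CX$), and by Theorem 3.6 the second is equivalent to the condition displayed there (the one carrying $BY$ and $CY$); taken together, this is precisely Theorem 4.2.

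It then remains to rewrite those two conditions under $\mu=\langle\xi\rangle$. For $X,Y\in\Gamma\left(\left(\ker F_{*}\right)^{\perp}\right)$ I would use $\phi X=BX+CX$ from (3.3) together with the splittings $\nabla_{U}X=\mathcal{H}\nabla_{U}X+\mathcal{T}_{U}X$ and $\nabla_{X}Y=\mathcal{H}\nabla_{X}Y+\mathcal{A}_{X}Y$ of (2.9) and (2.11), recalling that the $\mathcal{T}$- and $\mathcal{A}$-terms are vertical while the $\mathcal{H}\nabla$-terms are horizontal, together with $\nabla_{X}\xi=-\phi X$. Since now $\left(\ker F_{*}\right)^{\perp}=\omega D_{1}\oplus\omega D_{2}\oplus\langle\xi\rangle$, the horizontal bundle has no invariant directions beyond the Reeb field and the endomorphism $C$ satisfies $C\xi=0$; this is what lets the two separate $BX$- and $CX$-terms in the criterion of Theorem 3.7, and likewise the $BY$- and $CY$-terms in that of Theorem 3.6, be recombined into single terms carrying the full tensor $\phi$, which produces the two displayed identities of the statement. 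The converse direction is immediate, since the simplification is an equivalence at every step.

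The only delicate part of the argument is this final bookkeeping step, and in particular the role of the Reeb field $\xi\in\mu$: being horizontal with $\phi\xi=0$, it causes several contributions to collapse, so one must take care not to double count when passing from $\left(BX,CX\right)$ to $\phi X$ and from $\left(BY,CY\right)$ to $\phi Y$. No geometric input beyond Theorems 3.6 and 3.7 and the product criterion of Section 4 is needed.
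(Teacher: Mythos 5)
Your skeleton is the same as the paper's: the statement is presented there as an immediate consequence of the product criterion recalled at the start of Section 4 together with the characterizations of the two totally geodesic foliations in Theorems 3.6 and 3.7, with no further computation written out. So the choice of route is not the issue.

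The genuine gap is in the only step that actually carries content, namely the rewriting of the conditions of Theorems 3.6 and 3.7 under the extra hypothesis $\left(\ker F_{*}\right)^{\perp}=\omega D_{1}\oplus\omega D_{2}\oplus\langle\xi\rangle$, i.e. $\mu=\langle\xi\rangle$. You justify it by saying that the horizontal bundle then has no invariant directions beyond $\xi$ and that $C\xi=0$, so the $B$- and $C$-terms may be ``recombined'' into a single $\phi$-term. That reasoning fails: $C$ does not vanish on $\omega D_{1}\oplus\omega D_{2}$. Applying $\phi^{2}=-I+\eta\otimes\xi$ to $V\in\Gamma(D_{i})$ (for which $\eta(V)=0$, since $\xi$ is horizontal) and separating vertical and horizontal parts gives $C\omega V=-\omega\varphi V$, which is nonzero for a proper bi-slant submersion; hence a term such as $g\left(\mathcal{H}\nabla_{U}\omega V,CX\right)$ has no reason to disappear on your argument. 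Note also that the harmless half of your merging, $g\left(\mathcal{T}_{U}\omega V,BX\right)=g\left(\mathcal{T}_{U}\omega V,\phi X\right)$, holds simply because $\mathcal{T}_{U}\omega V$ is vertical and $CX$ is horizontal, and uses the hypothesis $\mu=\langle\xi\rangle$ nowhere. So as written you have neither shown where the hypothesis actually enters nor accounted for the terms $g\left(\mathcal{H}\nabla_{U}\omega V,CX\right)$ and $g\left(\mathcal{H}\nabla_{X}CY,\omega U\right)$ that separate the conditions of Theorems 3.6 and 3.7 from the two displayed identities; closing that bookkeeping (for instance by redoing the computations in the proofs of Theorems 3.6 and 3.7 with $\phi X$ expanded along $\omega D_{1}\oplus\omega D_{2}\oplus\langle\xi\rangle$) is precisely what a proof of this theorem must supply.
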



\begin{thebibliography}{99}

\bibitem{akyol3} Akyol M.A. and Sar{\i} R., \textit{On semi-slant $\xi^{\perp}$-Riemannian submersions}, Mediterr. J. Math., 14, 234 (20 pp), 2017.

\bibitem{akyol2} Akyol M.A., Sar{\i} R. and Aksoy E., \textit{Semi-invariant $\xi^{\perp}$-Riemannian submersions from almost contact metric manifolds}, Int. J. Geom. Methods in Mod. Phys., 1(1), 50-67, 2018.

\bibitem{akyol4} Akyol M.A. and G\"{u}nd\"{u}zalp Y., \textit{ On the geometry of conformal anti-invariant $\xi^{\perp}$-submersions}, International Journal of Maps in Mathematics, 1(1), 50-67, 2018.

\bibitem{Alqahtani} Alqahtani LS., Stankovic MS. and Uddin S., \textit{Warped product bi-slant submanifolds of cosymplectic manifolds}, Filomat, 31:16, 5065-5071, 2017.

\bibitem{sezin} Aykurt Sepet S., Erg\"{u}t M., \textit{Pointwise slant submersions from cosymplectic manifolds}, Turk J Math., 40, 582-593, 2016.

\bibitem{baird} Baird P; Wood JC., \textit{Harmonic morphisms between Riemannian manifolds}, London Mathematical Society Monographs, Oxford University Press, Oxford, 2003.

\bibitem{cabrerizo} Cabrerizo JL., Carriazo A., Fernandez LM. and Fernandez M., \textit{Semi-slant submanifolds in sasakian manifolds}, Geom. Dedicata, 183-199, 1999.

\bibitem{carriazo}  Carriazo A., \textit{Bi-slant immersions}, In Proceeding of the ICRAMS, 88-97, 2000.

\bibitem{chen}  Chen B.Y., A., \textit{Geometry of Slant Submanifolds}, Katholieke Universiteit Leuven, Leuven, 1990.

\bibitem{falcitelli} Falcitelly M., Ianus S. and Pastore A. M., \textit{Riemannian Submersions and Related Topics}, World Scientific, River Edge, NJ, 2004.

\bibitem{gray} Gray A., \textit{Pseudo-Riemannian almost product manifolds and submersions}, J. Math. Mech., 16, 715-737, 1967.

\bibitem{gunduzalp} G\"{u}nd\"{u}zalp Y., \textit{Slant submersions from almost product Riemannian manifolds}, Turk J Math., 37, 863-873, 2013.

\bibitem{gunduzalpsemi} G\"{u}nd\"{u}zalp Y., \textit{Semi-slant submersions from almost product Riemannian manifolds}, Demonstratio Mathematica, 49(3), 345-356, 2016.

\bibitem{kupeli} K\"{u}peli Erken \.{I}., Murathan C., \textit{Slant Riemannian submersions from Sasakian manifolds}, Arab J. Math. Sci., 22, 250-264, 2016.

\bibitem{lee} Lee, J.W., \textit{Anti-invariant $\xi^{\perp}$-Riemannian submersions from almost contact manifolds}, Hacettepe J. Math. Stat. 42(3), 231-241, 2013.

\bibitem{lee2} Lee, J.W., \textit{Pointwise slant submersions}, Bull. Korean Math. Soc. 51(4), 1115-1126, 2014.

\bibitem{oneill} O'Neill B., \textit{The fundamental equations of a submersion}, Mich. Math. J., 13, 458-469, 1966.

\bibitem{ozdemir} \"{O}zdemir F., Sayar C. and Ta\c{s}tan H.M., \textit{Semi-invariant submersions whose total manifolds are locally product Riemannian}, Quaestiones Mathematicae, 40(7), 909-926, 2017.

\bibitem{prasad} Park K. S., Prasad R., \textit{Semi-slant submersions}, Bull. Korean Math. Soc., 50(3), 951-962, 2013.

\bibitem{sahinanti} \c{S}ahin B., \textit{Anti-invariant Riemannian submersions from almost Hermitian manifolds}, Central European J. Math.,  8(3), 437-447, 2010.

\bibitem{sahinsemi} \c{S}ahin B., \textit{Semi-invariant Riemannian submersions from almost Hermitian manifolds}, Can. Math. Bull., 56, 173-183, 2011.

\bibitem{sahinslant} \c{S}ahin B., \textit{Slant submersions from almost Hermitian manifolds}, Bull. Math. Soc. Sci. Math. Roumanie, 54(102), 93-105, 2011.

\bibitem{hakan} Ta\c{s}tan H. M., \c{S}ahin B. and Yanan \c{S}., \textit{Hemi-slant submersions}, Mediterr. J. Math. 13, 2171-2184, 2016.

\bibitem{Tastan3} Sayar C., Ta\c{s}tan H.M., \"{O}zdemir F. and Tripathi M.M. \textit{Generic submersions from Kaehler Manifolds}, Bull. Malays. Math. Sci. Soc., https://doi.org/10.1007/s40840-018-00716-2.

\bibitem{uddin} Uddin S., Chen BY. and Al-Solamy F. R.,\textit{Warped Product Bi-slant Immersions in Kaehler Manifolds }, Mediterr. J. Math, 14: 95. https://doi.org/10.1007/s00009-017-0896-8.

\bibitem{watson} Watson B., \textit{Almost Hermitian submersions}, J. Differential Geometry, 11(1), 147-165, 1976.
\end{thebibliography}
\end{document}